\pgfplotsset{compat=newest}
\theoremstyle{plain}
\newtheorem{theorem}{Theorem}
\newtheorem{lemma}[theorem]{Lemma}
\newtheorem{corollary}[theorem]{Corollary}
\newtheorem{proposition}[theorem]{Proposition}
\newtheorem{observation}[theorem]{Observation}
\theoremstyle{definition}
\newtheorem{conjecture}[theorem]{Conjecture}
\theoremstyle{remark}
\newcommand{\tinydot}[1]{
    \node at #1 {\tiny $\bullet$};
}
\newcommand{\evensmallerdot}[1]{
    \node at #1 {\scriptsize $\bullet$};
}
\newcommand{\smallerdot}[1]{
    \node at #1 {\footnotesize $\bullet$};
}
\newcommand{\smalldot}[1]{
    \node at #1 {\small $\bullet$};
}
\newcommand{\orderdot}[2]{
    \node at #1 {\normalsize $\bullet$};
    \node at #1 [below] {$#2$};
}
\newcommand{\opendot}[1]{
    \node at #1 {\normalsize \textcolor{white}{$\bullet$}};
    \node at #1 {\normalsize $\circ$};
}
\newcommand{\plotgrid}[2]{
    \foreach \i in {0,1,...,#2}{
        \draw [color=lightgray] (0.5, {\i+0.5})--({#1+0.5}, {\i+0.5});
    };
    \foreach \i in {0,1,...,#1}{
        \draw [color=lightgray] ({\i+0.5}, 0.5)--({\i+0.5}, {#2+0.5});
    };
}
\newcommand{\plotperm}[1]{
    \foreach \j [count=\i] in {#1} {
        \orderdot{(\i,\j)}{};
    };
}
\newcommand{\darkhline}[2]{
    \draw [color=darkgray, thick]
    (0.5,{#1+0.5}) -- ({#2+0.5},{#1+0.5});
}
\newcommand{\darkvline}[2]{
    \draw [color=darkgray, thick]
    ({#1+0.5},0.5) -- ({#1+0.5},{#2+0.5});
}
\newcommand{\plotpermgrid}[1]{
    \foreach \i [count=\n] in {#1} {};
    \foreach \i in {0,1,2,...,\n}{
        \draw [color=lightgray] ({\i+0.5}, 0.5)--({\i+0.5}, {\n+0.5});
        \draw [color=lightgray] (0.5, {\i+0.5})--({\n+0.5}, {\i+0.5});
    }
    \plotperm{#1};
}
\newcommand{\mob}{M\"{o}bius }
\newcommand{\order}[1]{\ensuremath{\left\lvert#1\right\rvert}}
\renewcommand{\interleave}{\obslash} 
\newcommand{\skewinterleave}{\oslash}
\newcommand{\mobfn}[2]{\mu[#1,#2]}
\newcommand{\oneplus}{1 \oplus} 
\newcommand{\plusone}{\oplus 1}
\newcommand{\oneplusone}{1 \oplus 1}
\newcommand{\oneil}{1 \interleave}
\newcommand{\ilone}{\interleave 1}
\newcommand{\familysum}[1]{\mathcal{F}_{\oplus}(#1)}
\newcommand{\familyil}[1]{\mathcal{F}_{\interleave}(#1)}
\newcommand{\contrib}[2]{\mathfrak{C}_{#1,#2}}
\newcommand{\shapes}{\mathcal{S}}
\newcommand{\weightgen}[3]{W(#1,#2,#3)}
\newcommand{\weightosc}[3]{W_{io}(#1,#2,#3)}
\newcommand{\dtwo}{21}
\newcommand{\nsums}[2]{\oplus^{#1}#2}
\newcommand{\nils}[2]{\interleave^{#1}#2}
\newcommand{\sumra}{\nsums{r}{\alpha}}
\newcommand{\sumrab}{\left(\sumra\right)}
\newcommand{\ildtwok}{\nils{k}{\dtwo}}
\newcommand{\ildtwokb}{\left(\ildtwok\right)}
\newcommand{\ildtwokp}{\nils{{k+1}}{\dtwo}}
\newcommand{\ildtwokpb}{\left(\ildtwokp\right)}
\DeclareMathOperator{\mink}{MinK}
\DeclareMathOperator{\maxk}{MaxK}
\DeclareMathOperator{\rawmink}{RawMinK}
\def\anonfootnote{\xdef\@thefnmark{}\@footnotetext}
\def\mymod#1{\allowbreak\mkern10mu({\operator@font mod}\,\,#1)}
\title{The \mob function of permutations with an indecomposable lower bound}
\author{
        Robert Brignall, 
        David Marchant
        \\
        \\
        \textit{School of Mathematics and Statistics}\\[-3pt]
        \textit{The Open University, Milton Keynes, MK7 6AA, UK}\\[10pt]
}
\begin{document}
\maketitle
\anonfootnote{\textit{Email:} \{robert.brignall; david.marchant\}@open.ac.uk}
\begin{abstract}    
    We show that the \mob function
    of an interval in a permutation poset
    where the lower bound is sum (resp. skew) indecomposable depends solely
    on the sum (resp. skew) indecomposable permutations contained
    in the upper bound,
    and that this can simplify the calculation
    of the \mob sum.
    For increasing oscillations,
    we give a recursion for the \mob sum
    which only involves evaluating
    simple inequalities.
\end{abstract}

\section{Introduction}

Let $\sigma$ and $\pi$ be permutations of natural numbers,
written in one-line notation, 
with 
$\sigma = \sigma_1 \sigma_2 \ldots \sigma_m$,
and
$\pi = \pi_1 \pi_2 \ldots \pi_n$.
We say that $\sigma$ is \emph{contained} in $\pi$ 
if there is a sequence  
$1 \leq i_1 < i_2 < \ \ldots < i_m \leq n$
such that for any $r,s \in \{1,\ldots,m\}$,
$\pi_{i_r} < \pi_{i_s}$ if and only if $\sigma_r < \sigma_s$.  
We say that $\pi$ \emph{avoids} $\sigma$ if $\pi$ does not contain $\sigma$.
The set of all permutations is a poset under the partial order given by
containment.  

%
%
A closed interval $[\sigma, \pi]$ in a poset is the set
defined as $\{ \tau : \sigma \leq \tau \leq \pi \}$.
A half-open interval $[\sigma, \pi)$ is the set
$\{ \tau : \sigma \leq \tau < \pi \}$.
The \mob function $\mobfn{\sigma}{\pi}$ 
is defined on an interval of a poset as follows:
for $\sigma \nleq \pi$, $\mobfn{\sigma}{\pi} = 0$;
for all $\lambda$, $\mobfn{\lambda}{\lambda} = 1$;
and for $\sigma < \pi$, 
\begin{align}
\mobfn{\sigma}{\pi} &
= 
- \sum_{\lambda \in [\sigma, \pi)} \mobfn{\sigma}{\lambda}.
\label{equation_mobius_function}
\end{align}

%
%
Our motivation for this paper is to find 
a \emph{contributing set} $\contrib{\sigma}{\pi}$
that is significantly smaller than
the poset interval $[\sigma, \pi)$,
and a $\{0, \pm1 \}$ weighting function $\weightgen{\sigma}{\alpha}{\pi}$
such that
\begin{align}
\label{equation_contributing_sum}
\mobfn{\sigma}{\pi} 
& = 
- \sum_{\alpha \in \contrib{\sigma}{\pi}}
\mobfn{\sigma}{\alpha} \weightgen{\sigma}{\alpha}{\pi}.
\end{align}
Plainly, in Equation~\ref{equation_contributing_sum},
we could set 
$\contrib{\sigma}{\pi} = [\sigma, \pi)$,
and $\weightgen{\sigma}{\alpha}{\pi} = 1$,
which is equivalent to
Equation~\ref{equation_mobius_function}.

One approach here would be to
take a permutation $\beta$ 
such that
$\sigma < \beta < \pi$.
We could then set 
$\contrib{\sigma}{\pi} 
= 
\{
\lambda : 
\lambda \in [\sigma, \pi) 
\text{ and } 
\lambda \not\in [\sigma, \beta]
\}$,
and $\weightgen{\sigma}{\alpha}{\pi} = 1$,
since, from Equation~\ref{equation_contributing_sum},
$\sum_{\lambda \in [\sigma, \beta]} \mobfn{\sigma}{\lambda} = 0$.
This approach was used in 
Smith~\cite{Smith2013},
who determined the \mob function on the interval
$[1, \pi]$ for all permutations $\pi$ with a single descent.
Smith's paper is unusual, in that it provides an explicit formula
for the value of the \mob function.

Our approach is different.  
We identify individual elements (say $\lambda$), of the poset
that have $\mobfn{\sigma}{\lambda} = 0$.  
We also
show that there are pairs of elements, 
$\lambda$ and $\lambda^\prime$,
where $\mobfn{\sigma}{\lambda} = - \mobfn{\sigma}{\lambda^\prime}$,
and so we can exclude these pairs of elements.
Finally, we show that there are
quartets of permutations $\lambda_1, \ldots, \lambda_4$
where $\sum_{i=1}^{4} \mobfn{\sigma}{\lambda_i} = 0$;
and that we can systematically identify these quartets.
By excluding these permutations from
$\contrib{\sigma}{\pi}$
we can significantly reduce the number of elements
in $\contrib{\sigma}{\pi}$ 
compared to the number of elements in the
interval $[\sigma, \pi)$. 
This approach results in
the ability to compute 
$\mobfn{\sigma}{\pi}$, 
where
$\sigma$ is indecomposable,
much faster than evaluating 
Equation~\ref{equation_mobius_function}.
For increasing oscillations, we will show
that the elements of $\contrib{\sigma}{\pi}$
can be determined using simple inequalities,
and that as a consequence
$\mobfn{\sigma}{\pi}$ can be determined
using inequalities.
With this approach, we 
have computed $\mobfn{1}{\pi}$,
where $\pi$ is an increasing oscillation,
up to $\order{\pi} = \text{2,000,000}$. 

%
%
The study of the \mob function in the permutation poset
was introduced by
Wilf~\cite{Wilf2002}.  
The first result in this area was by Sagan and Vatter~\cite{Sagan2006}, 
who determined the \mob function 
on intervals of layered permutations.
Steingr\'{\i}msson and Tenner~\cite{Steingrimsson2010} found a 
large class of pairs of permutations $(\sigma, \pi)$ 
where $\mobfn{\sigma}{\pi} = 0$, as well as determining the 
\mob function where $\sigma$ occurs exactly once in $\pi$, 
and $\sigma$ and $\pi$ satisfy certain other conditions.
Burstein, Jel{\'{i}}nek, Jel{\'{i}}nkov{\'{a}} 
and Steingr{\'{i}}msson~\cite{Burstein2011} found
a recursion for the \mob function
for sum/skew decomposable permutations
in terms of the sum/skew indecomposable 
permutations in the lower and upper bounds.
They also found a method to determine
the \mob function for separable permutations
by counting embeddings.
We use the recursions for 
decomposable permutations to underpin the first
part of this paper.
McNamara and Steingr{\'{i}}msson~\cite{McNamara2015}
investigated the topology of intervals in the 
permutation poset, and found a single recurrence
equivalent to the recursions in~\cite{Burstein2011}.

Many results for $\mobfn{\sigma}{\pi}$ 
have been obtained by considering
ways in which $\sigma$ can be found in $\pi$,
which we call an \emph{embedding} of $\sigma$ in $\pi$,
although typically only some of the embeddings 
(``normal embeddings'') are counted.
In \cite{Smith2016}, 
Smith used normal embeddings to
determine the \mob function $\mobfn{\sigma}{\pi}$
when $\sigma$ and $\pi$ have the same number of descents.

One problem with embeddings
arises in cases such as $\mobfn{1}{24153}$.
Here there are plainly only five ways
to embed the permutation $1$ 
into $24153$, however
$\mobfn{1}{24153} = 6$,
and thus the embedding approach
is not sufficient.

One possible solution to this issue
is to count the normal embeddings and then
add a ``correction factor''.
This approach
was used by Smith in~\cite{Smith2016a}.
The result applies to all intervals in the 
permutation poset, although the
correction factor is a rather complicated
double sum, which still involves
$\sum_{\lambda \in [\sigma, \pi)} \mobfn{\sigma}{\lambda}$.

In this paper we
show that the \mob function
on intervals with a sum indecomposable lower bound
depends only on the sum indecomposable permutations
contained in the upper bound.
We provide a weighting function that
determines which sum indecomposable permutations
contribute to the \mob sum.
We then consider increasing oscillations.
For these permutations, we show how we can
find all of the permutations that contribute to the \mob sum
by applying simple numeric inequalities,
which leads to a fast polynomial algorithm
for determining the \mob function.

We start with some essential definitions and notation
in Section~\ref{section-definitions-and-notation},
then in Section~\ref{section-preliminary-lemmas} 
we provide a number of preliminary lemmas.
We conclude this section with a theorem
that gives $\mobfn{\sigma}{\pi}$,
where $\sigma$
is a sum indecomposable permutation,
for all $\pi$.
In Section~\ref{section-increasing-oscillations}
we consider $\mobfn{\sigma}{\pi}$
where $\sigma$
is a sum indecomposable permutation,
and $\pi$ is an increasing oscillation.
We finish with some concluding remarks in 
Section~\ref{section-concluding-remarks}.

\section{Definitions and notation}\label{section-definitions-and-notation}

When discussing the \mob function, 
$\mobfn{\sigma}{\pi} = - \sum_{\lambda \in [\sigma, \pi)} \mobfn{\sigma}{\lambda}$,
we will frequently be examining the value of
$\mobfn{\sigma}{\lambda}$ for a specific permutation 
$\lambda$.  We say that this is the \emph{contribution} that 
$\lambda$ makes to the sum.  If we have a set of permutations
$S \subseteq [\sigma, \pi)$
such that
$\sum_{\lambda \in S} \mobfn{\sigma}{\lambda} = 0$, 
then we say that the set $S$ makes
\emph{no net contribution} to the sum.

A \emph{sum} 
of two permutations $\alpha$ and $\beta$
of lengths $m$ and $n$ respectively
is the permutation 
$
\alpha_1, \ldots, \alpha_m,
\beta_1 + m, \ldots, \beta_n + m
$.
We write a sum as $\alpha \oplus \beta$.
A \emph{skew sum}, $\alpha \ominus \beta$,
is the permutation
$
\alpha_1 +n, \ldots, \alpha_m + n,
\beta_1, \ldots, \beta_n
$.
As examples,
$321 \oplus 213 = 321546$, 
and 
$321 \ominus 213 = 654213$, 
and these are shown in 
Figure~\ref{figure_examples-of-sums-and-interleaves}.
A \emph{sum-indecomposable} 
(resp. \emph{skew-indecomposable})
permutation is a permutation
that cannot be written as the 
sum (resp. skew-sum) of two smaller permutations.

Let $\alpha$ be a permutation, 
and $r$ a positive integer.
Then $\sumra$
is $\alpha \oplus \alpha \oplus \ldots \oplus \alpha \oplus \alpha$,
with $r$ occurrences of $\alpha$.
If $S$ is a set of permutations, then
$\nsums{r}{S} = \cup_{\lambda \in S} \{ \nsums{r}{\lambda} \}$.

The \emph{interleave} of two permutations
$\alpha$ and $\beta$ is formed by taking
the sum $\alpha \oplus \beta$,
and then exchanging the value of the largest point from
$\alpha$ with the value of the smallest point from $\beta$.
We can also view this as increasing the largest
point from $\alpha$ by 1, and simultaneously
decreasing the smallest point from $\beta$ by 1.
We write an interleave as $\alpha \interleave \beta$.
For example, $321 \interleave 213 = 421536$,
see Figure~\ref{figure_examples-of-sums-and-interleaves}.

For completeness, we also define
a \emph{skew interleave}, $\alpha \skewinterleave \beta$,
which is formed 
by taking the skew sum $\alpha \ominus \beta$,
and then exchanging the smallest point from
$\alpha$ with the largest point from $\beta$.
As an example,
$321 \skewinterleave 213 = 653214$,
as shown in 
Figure~\ref{figure_examples-of-sums-and-interleaves}.

The interleave operations, 
$\interleave$ and $\skewinterleave$,
are not associative, as
$\oneil \oneil 1$ could represent
$231$ or $312$.  To avoid this ambiguity,
we require that the permutation $1$
can either be interleaved to the left
or to the right, but not both.  
It is easy to see that this restriction
establishes associativity.
We note here that, with this restriction,
an expression involving $\oplus$ and $\interleave$
represents a unique permutation
regardless of the order in which the operations
are applied.

Let $\alpha$ be a permutation with length greater than 1.  
We will frequently want to refer to 
permutations that 
have the form
$\alpha \interleave \alpha \interleave 
\ldots
\interleave \alpha \interleave \alpha$.
If there are $n$ copies of $\alpha$ being
interleaved, then we will write this as
$\nils{n}{\alpha}$, so, for example, 
we have $\nils{3}{(21)} = 21 \interleave 21 \interleave 21 = 315264$.

\begin{figure}[ht]
    \begin{footnotesize}
        \begin{center}
            \begin{subfigure}[t]{0.22\textwidth}
                \begin{center}
                    \begin{tikzpicture}[scale=0.25]   
                    \plotpermgrid{3,2,1,5,4,6}
                    \end{tikzpicture}
                \end{center}
                \caption*{$321 \oplus 213$}
            \end{subfigure}
            \begin{subfigure}[t]{0.22\textwidth}
                \begin{center}
                    \begin{tikzpicture}[scale=0.25]                    
                    \plotpermgrid{6,5,4,2,1,3}
                    \end{tikzpicture}
                \end{center}
                \caption*{$321 \ominus 213$}
            \end{subfigure}
            \begin{subfigure}[t]{0.22\textwidth}
                \begin{center}
                    \begin{tikzpicture}[scale=0.25]                    
                    \plotpermgrid{4,2,1,5,3,6}
                    \end{tikzpicture}
                \end{center}
                \caption*{$321 \interleave 213$}
            \end{subfigure}
            \begin{subfigure}[t]{0.22\textwidth}
                \begin{center}
                    \begin{tikzpicture}[scale=0.25]                    
                    \plotpermgrid{6,5,3,2,1,4}
                    \end{tikzpicture}
                \end{center}
                \caption*{$321 \skewinterleave 213$}
            \end{subfigure}
        \end{center}
    \end{footnotesize}
    \caption{Examples of direct and skew sums and interleaves.} 
    \label{figure_examples-of-sums-and-interleaves}	
\end{figure}
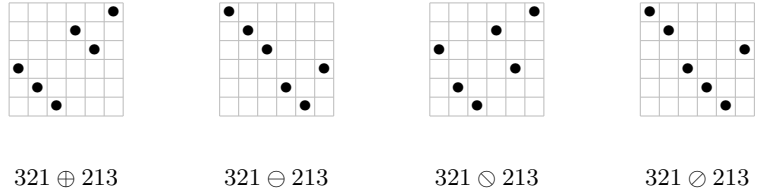    

For the remainder of this paper, 
by symmetry it suffices to 
discuss permutations in relation to 
sums and interleaves only.
For the same reason, 
references to (in)decomposable permutations 
may omit the ``sum'' qualifier.

An \emph{interval} of a permutation 
is a set of contiguous positions 
where the set of values is also contiguous.
A \emph{simple} permutation is one where
the permutation contains no intervals
other than those of length one, and the entire permutation.

The \emph{increasing oscillating sequence} 
is the sequence
\[
4, 1, 6, 3, 8, 5, 10, 7, \ldots, 2k+2, 2k-1, \ldots.
\]
The start of the sequence is depicted in 
Figure~\ref{figure_increasing_oscillating_sequence}.  
\begin{figure}
    \centering
    \begin{subfigure}[c]{0.4\textwidth}
        \centering
        \begin{tikzpicture}[scale=0.2]
        \plotgrid{12}{14}
        \plotperm{4, 1, 6, 3, 8, 5}
        \smalldot{(7,10)}  
        \smalldot{(8,7)}
        \smallerdot{(9, 12)}
        \smallerdot{(10,  9)}
        \evensmallerdot{(11, 14)}
        \evensmallerdot{(12, 11)}
        \tinydot{(13, 16)}
        \tinydot{(14, 13)}
        \end{tikzpicture}
    \end{subfigure}
    \caption{A depiction of the start of the increasing oscillating sequence.}
    \label{figure_increasing_oscillating_sequence}
\end{figure}
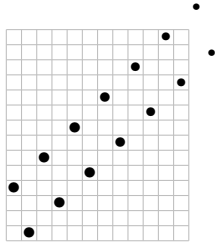  
An \emph{increasing oscillation}
is a simple permutation contained in
the increasing oscillating sequence.
For lengths greater than three, 
there are exactly two increasing oscillations
of each length.  
Let $W_n$ be the increasing oscillation 
with $n$ elements which starts with a descent,
and
let $M_n$ be the increasing oscillation
with $n$ elements which starts with an ascent.  Then
\begin{align*}
W_{2n}   & = \nils{n}{\dtwo},  &
M_{2n}   & = \oneil \left( \nils{n-1}{\dtwo} \right) \ilone,  \\
W_{2n-1} & = \left( \nils{n-1}{\dtwo} \right) \ilone, \qquad \text{and} &
M_{2n-1} & = \oneil \left( \nils{n-1}{\dtwo} \right). \\
\end{align*}
Note that $W_n = M_n^{-1}$.

There are instances where,
for some permutation $\alpha$, 
we are interested in the set of
permutations 
$
\{\alpha, \;
\oneplus \alpha, \;
\alpha \plusone, \;
\oneplus \alpha \plusone
\}$.
Given a permutation $\alpha$, we
refer to this set 
as $\familysum{\alpha}$,
and we 
say that this set 
is the \emph{family} of $\alpha$.
If $S$ is a set of permutations, then
$\familysum{S}= \cup_{\alpha \in S} \{\familysum{\alpha} \}$.

There are some also instances where we 
are interested in the set of
permutations 
$
\familyil{\alpha} = 
\{\alpha, \;
\oneil \alpha, \;
\alpha \ilone, \;
\oneil \alpha \ilone
\}$.
Note that every increasing oscillation
is an element of 
$
\familyil{\ildtwok}
$
for some $k \geq 1$.

\section{Preliminary lemmas and main theorem}\label{section-preliminary-lemmas}

In this section our aim is to show that 
if $\sigma$ is indecomposable, then for any $\pi \geq \sigma$
there is a $\{0, \pm 1\}$ weighting function
$\weightgen{\sigma}{\alpha}{\pi}$ 
and a set of permutations
$\contrib{\sigma}{\pi}$, such that
\[
\mobfn{\sigma}{\pi} 
= 
- \sum_{\alpha \in \contrib{\sigma}{\pi}}
\mobfn{\sigma}{\alpha} \weightgen{\sigma}{\alpha}{\pi}.
\]

If $\pi$ is 
the identity permutation
$12 \ldots n$ or its reverse,
then $\mobfn{\sigma}{\pi}$ is trivial for any $\sigma$, and we
exclude the identity and its reverse from
being the upper bound of any interval under consideration.

As noted earlier, our approach is to show that 
there are permutations, pairs of permutations, and
quartets of permutations in 
$[\sigma, \pi)$ 
that make no net contribution to the sum.

We use Proposition 1 and 2, and Corollary 3 
from
Burstein, 
Jel{\'{i}}nek, 
Jel{\'{i}}nkov{\'{a}} and 
Steingr{\'{i}}msson~\cite{Burstein2011}.
We start with some required notation.
If $\pi$ is a non-empty permutation with decomposition
$\pi_\oneplus \ldots \oplus \pi_n$, then 
for any integer $i$ with $0 \leq i \leq n$, 
$\pi_{\leq i}$ is the permutation 
$\pi_\oneplus \ldots \oplus \pi_i$,
and 
$\pi_{> i}$ is the permutation
$\pi_{i+1} \oplus \ldots \oplus \pi_n$.
An empty sum of permutations is defined as $\varepsilon$, 
and in particular $\pi_{\leq 0} = \pi_{> n} = \varepsilon$.
We can see that 
$\mobfn{\varepsilon}{\varepsilon} = 1$,
$\mobfn{\varepsilon}{1} = -1$ and
$\mobfn{\varepsilon}{\tau} = 0$ for any $\tau > 1$.
We now recall the results from 
Burstein, 
Jel{\'{i}}nek, 
Jel{\'{i}}nkov{\'{a}} and 
Steingr{\'{i}}msson:
\begin{proposition}[{%
    Burstein, 
    Jel{\'{i}}nek, 
    Jel{\'{i}}nkov{\'{a}} and 
    Steingr{\'{i}}msson 
    \cite[Proposition 1]{Burstein2011}}]
    \label{BJJS_proposition_1}
    Let $\sigma$ and $\pi$ be non-empty permutations
    with decompositions 
    $\sigma = \sigma_\oneplus \ldots \oplus \sigma_m$
    and
    $\pi = \pi_\oneplus \ldots \oplus \pi_n$,
    with $n \geq 2$.
    Assume that $\pi_1 = 1$, 
    and let $k$ be the largest integer such that
    $\pi_1, \pi_2, \ldots ,\pi_k$ are all equal to $1$.
    Let $l \geq 0$ be the largest integer such that
    $\sigma_1, \sigma_2, \ldots, \sigma_l$ are all equal to $1$.
    Then
    \begin{align*}
    \mobfn{\sigma}{\pi} & =
    \begin{cases*}
    0 & \text{if $k-1 > l$,} \\
    -\mobfn{\sigma_{> k-1}}{\pi_{> k}} & \text{if $k-1 = l$,} \\
    \mobfn{\sigma_{> k}}{\pi_{> k}} - \mobfn{\sigma_{> k-1}}{\pi_{> k}} & \text{if $k-1 < l$.}
    \end{cases*}
    \end{align*}
\end{proposition}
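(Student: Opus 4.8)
The plan is to prove the identity by strong induction on $\order{\pi}$, using only the defining recursion (Equation~\ref{equation_mobius_function}) for the \mob function together with one structural lemma describing down-sets of permutations of the form $1^k \oplus \beta$. Throughout I write $\pi = 1^k \oplus \beta$, where $\beta = \pi_{>k}$ is either empty or begins with a sum-indecomposable block of length at least two, and write $\sigma = 1^l \oplus \sigma^\ast$ analogously, so that $\sigma^\ast = \sigma_{>l}$ does not begin with $1$.

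The first step is a \emph{down-set lemma}. For any $\lambda$ with exactly $a$ leading $1$s, written $\lambda = 1^a \oplus \lambda^\ast$ with $\lambda^\ast$ not beginning with $1$, I claim
\[
1^a \oplus \lambda^\ast \le 1^k \oplus \beta
\quad\Longleftrightarrow\quad
1^{\max(a-k,\,0)} \oplus \lambda^\ast \le \beta .
\]
The observation behind this is that an occurrence of $\lambda^\ast$ inside $1^k \oplus \beta$ can never use any of the $k$ leading $1$s: such a $1$ is simultaneously the least and the leftmost chosen point, so it would have to play the role of a point of $\lambda^\ast$ that is at once least and leftmost, and a permutation not beginning with $1$ has no such point. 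Hence $\lambda^\ast \le \beta$, and the only remaining freedom is how many extra bottom-left points are available to realise the leading $1$s of $\lambda$, which gives the displayed equivalence. Applied with $1^k\oplus\beta$ replaced by $\lambda = 1^a\oplus\lambda^\ast$, the same lemma also describes exactly when $\sigma \le \lambda$.

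The second step is a reduction. Since $\pi_{>1} = 1^{k-1}\oplus\beta$ satisfies $\pi_{>1} < \pi$, and since $\sum_{\lambda\in[\sigma,\pi_{>1}]}\mobfn{\sigma}{\lambda}$ equals $\delta_{\sigma,\pi_{>1}}$ ($1$ if $\sigma=\pi_{>1}$ and $0$ otherwise) by the recursion, splitting $[\sigma,\pi)$ as $[\sigma,\pi_{>1}]\sqcup A$, where $A = \{\lambda \in [\sigma,\pi) : \lambda \not\le \pi_{>1}\}$, turns Equation~\ref{equation_mobius_function} into
\[
\mobfn{\sigma}{\pi} \;=\; -\,\delta_{\sigma,\,\pi_{>1}} \;-\; \sum_{\lambda \in A}\mobfn{\sigma}{\lambda}.
\]
The down-set lemma identifies $A$ precisely: comparing the two conditions $\lambda\le\pi$ and $\lambda\le\pi_{>1}$ forces $\lambda$ to have at least $k$ leading $1$s, and one checks that $\lambda\in A$ exactly when $\lambda = 1^k \oplus \rho$ for some $\rho \le \beta$ that is maximal with respect to prepending a $1$ (that is, $1 \oplus \rho \not\le \beta$), with $\rho \ne \beta$ and $\sigma\le 1^k\oplus\rho$. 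Each such $1^k\oplus\rho$ has $\order{1^k\oplus\rho} < \order{\pi}$, so by the induction hypothesis every summand is known.

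The final step is to evaluate this sum and collapse it into the three stated cases. The permutation $\sigma$ enters only through the lower-bound constraint $\sigma \le 1^k\oplus\rho$, which by the (reversed) down-set lemma is governed by comparing the number $l$ of leading $1$s of $\sigma$ with $k-1$: when $k-1 > l$ there is enough ``$1$-slack'' between $\sigma$ and the members of $A$ that the contributions cancel completely and the total is $0$; when $k-1 = l$ a single boundary term $-\mobfn{\sigma_{>k-1}}{\pi_{>k}}$ survives; and when $k-1 < l$ two boundary terms survive, giving $\mobfn{\sigma_{>k}}{\pi_{>k}} - \mobfn{\sigma_{>k-1}}{\pi_{>k}}$. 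I would organise this as a sign-reversing involution that toggles a single leading $1$, so that the unpaired elements are exactly those at the boundary of the constraint $\sigma\le\lambda$. I expect this last step to be the main obstacle: making the involution precise, verifying that it is genuinely sign-reversing (which uses the induction hypothesis to evaluate $\mobfn{\sigma}{1^k\oplus\rho}$), and checking that the surviving terms carry exactly the signs needed to produce $\mobfn{\sigma_{>k}}{\pi_{>k}}$ and $-\mobfn{\sigma_{>k-1}}{\pi_{>k}}$ rather than some other combination. Care will also be needed for the degenerate cases $\sigma = \pi_{>1}$, $\sigma = \varepsilon$, $l = 0$, and $\beta = \varepsilon$ (where $\pi$ is an identity and $[\sigma,\pi]$ is a chain), in which the $\delta$-term or the convention $\mobfn{\,\cdot\,}{\varepsilon}=0$ must be invoked; the chain case $\beta=\varepsilon$ is directly verifiable and already exhibits all three outcomes.
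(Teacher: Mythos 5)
A preliminary remark: the paper does not prove this statement at all --- it is imported verbatim, with citation, from Burstein, Jel{\'{i}}nek, Jel{\'{i}}nkov{\'{a}} and Steingr{\'{i}}msson \cite{Burstein2011} --- so your attempt can only be judged on its own terms, not against an internal proof. On those terms, your first two steps are correct and well justified: the down-set lemma holds (your argument that an occurrence of $\lambda^\ast$ could use $j \geq 1$ of the leading ones only if $\lambda^\ast$ itself began with $j$ ones, contradicting its definition, is exactly right); the reduction $\mobfn{\sigma}{\pi} = -\delta_{\sigma,\pi_{>1}} - \sum_{\lambda \in A}\mobfn{\sigma}{\lambda}$ is valid; and your description of $A$ as the set of $1^k \oplus \rho$ with $\rho < \beta$, $1 \oplus \rho \not\leq \beta$ and $\sigma \leq 1^k \oplus \rho$ is accurate.

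The genuine gap is your third step, which you explicitly defer (``I expect this last step to be the main obstacle''). That step is not a technicality: it is where the three cases and their particular signs are created, i.e.\ it is the proposition. Moreover, the mechanism you sketch for it cannot work as stated. A sign-reversing involution on $A$ that ``toggles a single leading $1$'' has no legal moves: by your own characterisation, every $\lambda = 1^k \oplus \rho \in A$ satisfies $1 \oplus \lambda \not\leq \pi$ (because $1 \oplus \rho \not\leq \beta$) and $1^{k-1} \oplus \rho \leq \pi_{>1}$, so both toggles exit $A$, and every element of $A$ is a fixed point. What actually works is different: first apply the induction hypothesis to each summand $\mobfn{\sigma}{1^k\oplus\rho}$ after grouping $\rho = 1^b \oplus \rho^\ast$ by its number $b$ of leading ones --- this alone settles the case $k-1>l$, where every summand vanishes individually and no cancellation occurs --- and then, for the two remaining cases, prove an auxiliary identity of the shape
\[
\sum_{\substack{\gamma \leq \beta,\ \gamma \text{ not beginning with } 1 \\ 1 \oplus \gamma \not\leq \beta}} \mobfn{\tau}{\gamma}
\;=\;
\begin{cases*}
1 & \text{if $\tau = \beta$,} \\
0 & \text{otherwise,}
\end{cases*}
\]
together with its analogues for lower bounds of the form $1^j \oplus \sigma^\ast$, which are needed when $k-1<l$. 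This identity is proved by partitioning the full down-set of $\beta$ (not $A$) according to the maximal number of ones that can be prepended to each element, and telescoping the values supplied by the induction hypothesis; that larger index set is where the pairing $\gamma \leftrightarrow 1 \oplus \gamma$ genuinely lives. Until this identity is stated and proved, and the resulting bookkeeping for $k-1<l$ (including the exclusion of $\rho = \beta$ and the degenerate cases you list) is carried through, your argument has reduced the proposition to a claim of essentially equal difficulty rather than established it.
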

\begin{proposition}[{%
    \cite[Proposition 2]{Burstein2011}}]
    \label{BJJS_proposition_2}
    Let $\sigma$ and $\pi$ be non-empty permutations
    with decompositions 
    $\sigma = \sigma_\oneplus \ldots \oplus \sigma_m$
    and
    $\pi = \pi_\oneplus \ldots \oplus \pi_n$,
    with $n \geq 2$.
    Assume that $\pi_1 \neq 1$, 
    and let $k$ be the largest integer such that
    $\pi_1, \pi_2, \ldots ,\pi_k$ are all equal to $\pi_1$.
    Then
    \begin{align*}
    \mobfn{\sigma}{\pi} & =
    \sum_{i=1}^m
    \sum_{j=1}^k
    \mobfn{\sigma_{\leq i}}{\pi_{1}}
    \mobfn{\sigma_{> i}}{\pi_{> j}}.
    \end{align*}    
\end{proposition}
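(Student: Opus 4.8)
The plan is to isolate a single structural fact about how containment interacts with the sum decomposition, and then to run a \mob function induction on $\order{\pi}$ around it. The fact I would prove first is: if $\pi = \pi_1 \oplus \cdots \oplus \pi_n$, then $\lambda \leq \pi$ if and only if $\lambda$ can be written as $\lambda = \mu_1 \oplus \cdots \oplus \mu_n$ with each (possibly empty) $\mu_i \leq \pi_i$. Only the forward direction has content: fixing an embedding of $\lambda$ into $\pi$, every point of the block $\pi_i$ sits strictly below and to the left of every point of $\pi_{i+1}$, so the points of $\lambda$ landing in $\pi_i$ occupy a contiguous range of positions and of values; hence the restriction of $\lambda$ to those points is a pattern $\mu_i \leq \pi_i$, and $\lambda = \mu_1 \oplus \cdots \oplus \mu_n$. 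The key subtlety, which drives the whole statement, is that this decomposition of a fixed $\lambda$ is generally not unique: a shared atom can be shifted between adjacent blocks, and whole copies of a repeated leading block can be skipped.

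With this lemma in hand I would induct on $\order{\pi}$ using $\mobfn{\sigma}{\pi} = -\sum_{\sigma \leq \lambda < \pi} \mobfn{\sigma}{\lambda}$ (legitimate because $\lambda < \pi$ forces $\order{\lambda} < \order{\pi}$). Consider first the case $k = 1$, where the target is $\pi_1 \oplus \pi_{>1}$. Here I would reorganize the sum over $\lambda$ by the cut that places one part of $\lambda$ into $\pi_1$ and the rest into $\pi_{>1}$; by the structural lemma this rewrites $\sum_{\lambda} \mobfn{\sigma}{\lambda}$ as a convolution, over the cut index $i$ of $\sigma$, of the \mob sum on $[\sigma_{\leq i}, \pi_1]$ against the \mob sum on $[\sigma_{>i}, \pi_{>1}]$; substituting the inductive formula for the strictly smaller upper bound $\pi_{>1}$ and collecting terms should give $\sum_{i=1}^m \mobfn{\sigma_{\leq i}}{\pi_1}\mobfn{\sigma_{>i}}{\pi_{>1}}$. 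Conceptually this is just the statement that the componentwise, hence convolutional, zeta function of the interval inverts to a convolution of \mob functions, with $i$ as the convolution variable; the alternating signs of $\mu$ are precisely what absorb the non-uniqueness of the gluing, so that no explicit bijection is needed.

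For general $k$ the leading blocks $\pi_1 = \cdots = \pi_k$ come into play, and this is where the real work sits. After the prefix $\sigma_{\leq i}$ is placed into $\pi_1$, the suffix $\sigma_{>i}$ may begin only after some number of the remaining equal copies of $\pi_1$ have been passed over, so it is matched into one of $\pi_{>1}, \pi_{>2}, \ldots, \pi_{>k}$; summing over this exit point produces the inner sum $\sum_{j=1}^k$, and it collapses to $\pi_{>1}$ exactly when $k = 1$. Making this precise—peeling the $k$ equal copies one at a time and checking that the contributions telescope with no surviving cross terms—is the crux, and I expect it to be the main obstacle; the degenerate sub-cases in which a reduced upper bound $\pi_{>j}$ begins with one or more copies of $1$ fall to Proposition~\ref{BJJS_proposition_1}, which dovetails with the induction.
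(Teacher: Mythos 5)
First, a point of reference: the paper you are working from does not prove this statement at all --- it is imported verbatim, with a citation, from Burstein, Jel{\'{i}}nek, Jel{\'{i}}nkov{\'{a}} and Steingr{\'{i}}msson \cite[Proposition 2]{Burstein2011}. So there is no in-paper proof to compare against; the relevant comparison is with the original source, which establishes the result by an induction that does, in full detail, exactly the bookkeeping your proposal defers. Your setup (the lemma that $\lambda \leq \pi_1 \oplus \cdots \oplus \pi_n$ if and only if $\lambda = \mu_1 \oplus \cdots \oplus \mu_n$ with each possibly-empty $\mu_i \leq \pi_i$, followed by induction on $\order{\pi}$ through the defining recursion) is the right skeleton, and your reading of the inner index $j$ as the ``exit point'' past the repeated leading blocks is the correct interpretation of the formula.

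However, the proposal has a genuine gap, in fact two, and they sit precisely where the content of the proposition lies. (1) The step that rewrites $\sum_{\lambda \in [\sigma,\pi)} \mobfn{\sigma}{\lambda}$ as a convolution over pairs $(\mu,\nu)$ with $\mu \leq \pi_1$ and $\nu \leq \pi_{>1}$ is not a re-indexing: the gluing map $(\mu,\nu) \mapsto \mu \oplus \nu$ is many-to-one, so the convolution sum counts some $\lambda$ with multiplicity. Already for $\pi = 21 \oplus 1 = 213$, the permutation $\lambda = 1$ arises both as $1 \oplus \varepsilon$ and as $\varepsilon \oplus 1$, while $\lambda = 21$ arises only once; the multiplicities are not uniform, so the two sums genuinely differ term by term. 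Your justification --- that ``the alternating signs of $\mu$ are precisely what absorb the non-uniqueness of the gluing'' --- is a restatement of what must be proven, not an argument; making it precise (via a canonical decomposition, or an inclusion--exclusion over shifts of atoms between adjacent blocks) is exactly the work the proposition requires, and it is what the original proof spends its effort on. (2) For general $k$ you explicitly leave the telescoping over the $k$ equal leading blocks as ``the crux'' and ``the main obstacle.'' Since the double sum $\sum_{i=1}^{m}\sum_{j=1}^{k}$ is precisely what distinguishes this result from a naive product formula, declaring that step hard does not discharge it. As it stands, the proposal is a sensible plan of attack with the correct scaffolding, but it is not a proof.
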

\begin{corollary}[{%
    \cite[Corollary 3]{Burstein2011}}]
    \label{BJJS_corollary_3}
    Let $\sigma$ and $\pi$ be as in~\ref{BJJS_proposition_2}.
    Suppose that $\sigma$ is sum indecomposable, so $m = 1$.
    Then
    \begin{align*}
    \mobfn{\sigma}{\pi} & =
    \begin{cases*}
    \mobfn{\sigma}{\pi_1} & \text{if $\pi = \nsums{k}{\pi_1$},} \\
    -\mobfn{\sigma}{\pi_1} & \text{if $\pi = \left(\nsums{k}{\pi_1} \right) \plusone$,} \\
    0 & \text{otherwise,}
    \end{cases*}
    \end{align*}
\end{corollary}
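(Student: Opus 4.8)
The plan is to derive this directly from Proposition~\ref{BJJS_proposition_2} by specializing to the sum-indecomposable case $m = 1$. Setting $m = 1$ collapses the outer sum to the single term $i = 1$, and since $\sigma$ is indecomposable we have $\sigma_{\leq 1} = \sigma$ and $\sigma_{>1} = \varepsilon$. Thus Proposition~\ref{BJJS_proposition_2} reduces to
\[
\mobfn{\sigma}{\pi} = \mobfn{\sigma}{\pi_1} \sum_{j=1}^{k} \mobfn{\varepsilon}{\pi_{>j}},
\]
so the whole problem becomes evaluating the scalar sum $\sum_{j=1}^{k} \mobfn{\varepsilon}{\pi_{>j}}$ and multiplying by the factor $\mobfn{\sigma}{\pi_1}$.

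First I would invoke the values $\mobfn{\varepsilon}{\varepsilon} = 1$, $\mobfn{\varepsilon}{1} = -1$, and $\mobfn{\varepsilon}{\tau} = 0$ for all $\tau > 1$, already recorded in the excerpt. Consequently the $j$-th summand is nonzero only when the tail $\pi_{>j}$ is either empty or the single point $1$. The term $\pi_{>j} = \varepsilon$ occurs exactly when $j = n$, contributing $+1$; the term $\pi_{>j} = 1$ occurs exactly when $j = n-1$ and $\pi_n = 1$, contributing $-1$. Because the summation index obeys $j \leq k \leq n$, the value $j = n$ is attainable only if $k = n$, and $j = n-1$ only if $k \geq n-1$.

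Next I would run the case analysis on how $k$ compares with $n$, using throughout that $\pi_1 \neq 1$ forces $\pi_1$ to have length at least two. If $k = n$, then every component equals $\pi_1$, i.e.\ $\pi = \nsums{k}{\pi_1}$; here the only surviving summand is $j = n$, giving $+1$, since for $j < n$ the tail $\pi_{>j}$ contains a copy of $\pi_1$ and hence has length at least two. If $k = n-1$ and $\pi_n = 1$, then $\pi = \left(\nsums{k}{\pi_1}\right)\plusone$; the only surviving summand is $j = n-1$, giving $-1$, as all earlier tails are again too long. In every remaining configuration, namely $k < n-1$, or $k = n-1$ with $\pi_n \neq 1$, each tail $\pi_{>j}$ with $j \leq k$ has length at least two, so all summands vanish and the sum is $0$. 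Reinstating the factor $\mobfn{\sigma}{\pi_1}$ then yields the three stated cases.

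The only delicate point is the boundary bookkeeping: one must verify that at most one summand is ever nonzero and that the inequalities governing $k$ translate faithfully into the structural descriptions $\pi = \nsums{k}{\pi_1}$ and $\pi = \left(\nsums{k}{\pi_1}\right)\plusone$. The hypothesis $\pi_1 \neq 1$ does the essential work here, since it guarantees that any tail containing a $\pi_1$ component is strictly longer than a single point; this is precisely what rules out spurious $+1$ or $-1$ contributions and keeps the first two cases mutually exclusive.
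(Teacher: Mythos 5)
Your proposal is correct. The paper does not prove this statement itself---it is quoted verbatim from Burstein, Jel\'{\i}nek, Jel\'{\i}nkov\'{a} and Steingr\'{\i}msson~\cite{Burstein2011} without proof---but your derivation (specializing Proposition~\ref{BJJS_proposition_2} to $m=1$, factoring out $\mobfn{\sigma}{\pi_1}$, and using $\mobfn{\varepsilon}{\varepsilon}=1$, $\mobfn{\varepsilon}{1}=-1$, $\mobfn{\varepsilon}{\tau}=0$ for $\tau>1$ together with the hypothesis $\pi_1\neq 1$ to kill all but at most one term of $\sum_{j=1}^{k}\mobfn{\varepsilon}{\pi_{>j}}$) is exactly the intended route, and your case analysis on $k$ versus $n$ is complete and sound.
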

A simple consequence of 
Propositions~\ref{BJJS_proposition_1}
and~\ref{BJJS_proposition_2}
is the identification
of some intervals of permutations where 
the value of the \mob function is zero.
\begin{lemma}
    \label{lemma_mobius_function_is_zero}
    Let $\pi \in 
    \{
    \oneplus \oneplus \tau,
    \tau \plusone \plusone,
    \familysum{\sumrab \oplus \tau^\prime}
    \}
    $,
    where $\tau$ is any permutation,
    $r$ is maximal,
    $\alpha$ is sum indecomposable,
    and
    $\tau^\prime$ is any permutation
    greater than $1$.
    Let $\sigma$ be a sum indecomposable permutation.
    Then
    $
    \mobfn{\sigma}{\pi}  = 0
    $.
\end{lemma}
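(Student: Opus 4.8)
The plan is to reduce all three shapes of $\pi$ to Proposition~\ref{BJJS_proposition_1} and Corollary~\ref{BJJS_corollary_3}, handling each shape in turn. I would first dispose of $\pi = \tau \plusone \plusone$ by symmetry. The reverse-complement map is an automorphism of the permutation poset, it preserves sum indecomposability, and it sends a direct sum to the direct sum of the reverse-complemented summands in the opposite order; hence $\mobfn{\sigma}{\tau \plusone \plusone} = \mobfn{\sigma^{\mathrm{rc}}}{\oneplus \oneplus \tau^{\mathrm{rc}}}$ with $\sigma^{\mathrm{rc}}$ again indecomposable. So this shape follows the moment the shape $\oneplus \oneplus \tau$ is settled, and I never treat trailing fixed points directly.

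For $\pi = \oneplus \oneplus \tau$ the leading sum-component is $1$, with $k \geq 2$ copies of it, so Proposition~\ref{BJJS_proposition_1} applies. As $\sigma$ is indecomposable we have $m = 1$ and $l \in \{0,1\}$, according to whether $\sigma = 1$. If $\sigma \neq 1$ then $l = 0 < k-1$ and the proposition returns $0$ at once. If $\sigma = 1$ then $l = 1$, so either $k > 2$, giving $0$, or $k = 2$, giving $-\mobfn{\varepsilon}{\pi_{>2}}$; here maximality of $k$ forces the leading component of $\pi_{>2}$ to differ from $1$, so $\pi_{>2} > 1$ and the term vanishes (the only way $\pi_{>2}$ could be empty is $\pi = 12\ldots$, the excluded identity).

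The substantive shape is $\pi \in \familysum{\sumrab \oplus \tau'}$. Put $\beta = \sumrab \oplus \tau'$; maximality of $r$ says $\tau'$ does not begin with $\alpha$, and $\tau' > 1$ says $\tau' \notin \{\varepsilon, 1\}$. When $\abs{\alpha} = 1$ the block $\sumrab$ is a run of fixed points, so once $r \geq 2$ every family member begins, or ends after its framing $1$s, with at least two fixed points and is covered by the first two shapes; I therefore take $\abs{\alpha} \geq 2$ and split on the four family members. For $\pi = \beta$ and $\pi = \beta \plusone$ the leading component is $\alpha \neq 1$, occurring exactly $r$ times, so Corollary~\ref{BJJS_corollary_3} applies; since $\tau'$, respectively $\tau' \oplus 1$, is neither $\varepsilon$ nor $1$, neither nonzero branch of the corollary fires and the value is $0$. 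For $\pi = \oneplus \beta$ and $\pi = \oneplus \beta \plusone$ the leading component is $1$ with $k = 1$, so Proposition~\ref{BJJS_proposition_1} rewrites the value as $\mobfn{\varepsilon}{\beta} - \mobfn{1}{\beta}$ when $\sigma = 1$ and as $-\mobfn{\sigma}{\beta}$ when $\sigma \neq 1$ (and likewise with $\beta \plusone$ in place of $\beta$); the $\mobfn{\varepsilon}{\cdot}$ term vanishes because $\beta > 1$, and the surviving \mob values are precisely those shown to be $0$ for the first two family members.

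I expect the difficulty to be bookkeeping rather than any single idea: reading off $k$ and $l$ correctly in every branch, verifying that maximality of $r$ (respectively $k$) is exactly what keeps us in the zero branch of Corollary~\ref{BJJS_corollary_3} (respectively Proposition~\ref{BJJS_proposition_1}), and confirming that the singleton block $\abs{\alpha} = 1$ genuinely collapses into the first two shapes. The one structural point that matters is ordering the family argument so that $\beta$ and $\beta \plusone$ are treated before $\oneplus \beta$ and $\oneplus \beta \plusone$, which lets the latter pair inherit their vanishing from the former.
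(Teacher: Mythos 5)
Your proof takes essentially the same route as the paper's: Proposition~\ref{BJJS_proposition_1} disposes of $\oneplus \oneplus \tau$ (you case-split on $\sigma$ rather than on $\tau_1$, which is immaterial), the trailing-ones shape follows by symmetry, and for $\familysum{\sumrab \oplus \tau^\prime}$ you kill the members $\sumrab \oplus \tau^\prime$ and $\sumrab \oplus \tau^\prime \plusone$ with Corollary~\ref{BJJS_corollary_3} --- the paper instead computes directly from Proposition~\ref{BJJS_proposition_2}, of which the corollary is a repackaging --- and then reduce the two members with a prepended $1$ to those via Proposition~\ref{BJJS_proposition_1}. All of those steps are sound, and your bookkeeping of $k$, $l$ and of the maximality of $r$ is correct throughout.

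The one flaw is the handling of $\abs{\alpha}=1$. As you yourself note, the reduction to the first two shapes works only when $r \geq 2$, so ``I therefore take $\abs{\alpha} \geq 2$'' does not follow: the case $\alpha = 1$, $r = 1$ is left unproved. In fact no argument can close it, because the lemma is false there: $\pi = 1 \oplus \dtwo = 132$ satisfies the hypotheses ($\alpha = 1$, $r=1$ maximal, $\tau^\prime = \dtwo > 1$, $\sigma = 1$ sum indecomposable), yet $\mobfn{1}{132} = 1 \neq 0$, and similarly $\mobfn{1}{1 \oplus \dtwo \plusone} = \mobfn{1}{1324} = -1$. The paper's own proof has the identical blind spot, since Proposition~\ref{BJJS_proposition_2} applies only when $\pi_1 \neq 1$, i.e.\ only when $\alpha \neq 1$; the lemma should simply carry the hypothesis $\alpha \neq 1$, which is all that is ever needed downstream (when Lemma~\ref{lemma_mobius_function_is_zero} is invoked in Theorem~\ref{theorem_mobius_sum_bottom_level_indecomposable}, the third shape is always taken with $\alpha$ equal to the first sum component different from $1$). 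So your attempt is no less rigorous than the paper's --- indeed it is the more careful of the two --- but the restriction $\alpha \neq 1$ must be assumed, not derived.
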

\begin{proof}
    Consider $\pi = \oneplus \oneplus \tau$.
    We use Proposition~\ref{BJJS_proposition_1}.  
    If $\tau_1 = 1$,
    then $k \geq 3$, and $l \leq 1$, 
    and the result follows immediately.
    Now assume that $\tau_1 \neq 1$.
    Then $k=2$.  
    If $\sigma > 1$, then again the
    result follows immediately.
    If $\sigma=1$, then we have
    $\mobfn{\sigma}{\pi} 
    = - \mobfn{\sigma_{> k-1}}{\pi_{>k}} 
    = - \mobfn{\varepsilon}{\tau} 
    = 0$.
    The case for $\pi = \tau \plusone \plusone$
    follows by symmetry.
    
    Now consider 
    $\pi = \familysum{\sumrab \oplus \tau^\prime}$.    
    If $\pi = \sumrab \oplus \tau$,
    or $\pi = \sumrab \oplus \tau \plusone$,
    then
    we use Proposition~\ref{BJJS_proposition_2}.
    In that context we have $m = 1$ and $k = r$,
    and so
    $
    \mobfn{\sigma}{\pi}
    =
    \sum_{j=1}^r
    \mobfn{\sigma}{\pi_1}
    \mobfn{\varepsilon}{\pi_{> j}} 
    $
    For every value of $j$, $\pi_{> j}$ 
    is non-empty and greater than $1$, and so
    $\mobfn{\varepsilon}{\pi_{> j}} = 0$ for all $j$,
    and hence every term in the sum is zero.     
    If $\pi = \oneplus \sumrab \oplus \tau$
    or
    $\pi = \oneplus \sumrab \oplus \tau \plusone$,
    then
    we use Proposition~\ref{BJJS_proposition_1},
    which reduces to one of the previous cases.    
\end{proof}
We now turn to identifying pairs and quartets of 
permutations
that make no net contribution to the \mob sum.
We start by showing that 
if $\sigma$ and $\alpha$ are indecomposable,
and $r \geq 1$, and
with $\pi \in \familysum{\sumra}$,
then
$\mobfn{\sigma}{\pi}$ and $\mobfn{\sigma}{\alpha}$
have the same magnitude.
\begin{lemma}
    \label{lemma_pi_has_single_block}
    Let $\pi \in \familysum{\sumra}$,
    where $r \geq 1$ and $\alpha > 1$ is sum indecomposable.
    Let $\sigma$ be a sum indecomposable permutation. 
    Then 
    \[
    \mobfn{\sigma}{\pi} = 
    \begin{cases*}
    \mobfn{\sigma}{\alpha} & 
    \text{if $\pi = \sumra$\; or\; $\oneplus \sumrab \plusone $},
    \\
    -\mobfn{\sigma}{\alpha} &
    \text{if $\pi = \oneplus \sumrab $\; or\; $\sumrab \plusone$}.
    \end{cases*}
    \]
    
    As a consequence, $\familysum{\sumra}$ makes no net contribution to 
    $\mobfn{\sigma}{\pi}$ if
    $\familysum{\sumra} \subseteq [\sigma,\pi)$.
\end{lemma}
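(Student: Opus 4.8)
The plan is to partition $\familysum{\sumra}$ according to the first block of the sum decomposition: the two permutations $\sumra$ and $\sumrab \plusone$ begin with $\alpha$, while $\oneplus \sumrab$ and $\oneplus \sumrab \plusone$ begin with a singleton $1$. The first pair I would handle directly with Corollary~\ref{BJJS_corollary_3}, and the second pair with Proposition~\ref{BJJS_proposition_1}, reducing them back to the first pair. The reason this ordering matters is that the leading-$1$ cases strip off the initial $1$ and leave exactly a leading-$\alpha$ permutation, so establishing the leading-$\alpha$ values first gives me the base cases I need.

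First I would dispose of the leading-$\alpha$ cases. Since $\alpha > 1$ is sum indecomposable, the sum decomposition of $\sumra$ consists of exactly $r$ copies of $\alpha$, so $\pi_1 = \alpha \neq 1$ and the maximal run of equal initial blocks has length $k = r$. Corollary~\ref{BJJS_corollary_3} then applies verbatim: for $\pi = \sumra = \nsums{r}{\alpha}$ it yields $\mobfn{\sigma}{\pi} = \mobfn{\sigma}{\alpha}$, and for $\pi = \sumrab \plusone = (\nsums{r}{\alpha}) \plusone$ (where the final block $1 \neq \alpha$, so still $k = r$) it yields $\mobfn{\sigma}{\pi} = -\mobfn{\sigma}{\alpha}$. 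These are the two base identities.

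Next I would treat the two cases with a leading $1$ via Proposition~\ref{BJJS_proposition_1}. In both, $\pi_1 = 1$, and because $\pi_2 = \alpha > 1$ the leading run of $1$s has length $k = 1$, so $k - 1 = 0$. The applicable branch depends on $l$, the length of the leading run of $1$s in $\sigma$: as $\sigma$ is indecomposable, either $\sigma = 1$ (so $l = 1$ and $k-1 < l$) or $\sigma > 1$ (so $l = 0$ and $k-1 = l$). For $\pi = \oneplus \sumrab$ we have $\pi_{> 1} = \sumra$, and the proposition gives $-\mobfn{\sigma}{\sumra}$ in the $\sigma > 1$ branch and $\mobfn{\varepsilon}{\sumra} - \mobfn{1}{\sumra}$ in the $\sigma = 1$ branch; the two agree, and by the first base identity both equal $-\mobfn{\sigma}{\alpha}$. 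The case $\pi = \oneplus \sumrab \plusone$ is the same argument with $\pi_{> 1} = \sumrab \plusone$, reducing to $-\mobfn{\sigma}{\sumrab \plusone} = \mobfn{\sigma}{\alpha}$ through the second base identity.

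The one point to watch, and the place I expect the only genuine care to be needed, is the $\sigma = 1$ subcase, where the third branch of Proposition~\ref{BJJS_proposition_1} contributes the extra term $\mobfn{\varepsilon}{\pi_{> 1}}$; this is precisely what makes the $\sigma = 1$ and $\sigma > 1$ computations superficially different. It is resolved by noting that $\pi_{> 1}$ is always strictly larger than $1$, whence $\mobfn{\varepsilon}{\pi_{> 1}} = 0$ and the two branches collapse to the same value. Finally, the closing claim follows immediately by summing the four values: $\mobfn{\sigma}{\alpha} - \mobfn{\sigma}{\alpha} - \mobfn{\sigma}{\alpha} + \mobfn{\sigma}{\alpha} = 0$, so $\familysum{\sumra}$ makes no net contribution whenever it is contained in $[\sigma, \pi)$.
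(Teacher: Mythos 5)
Your proof is correct and follows the same route as the paper's own (much terser) argument: Corollary~\ref{BJJS_corollary_3} for the two cases beginning with $\alpha$, Proposition~\ref{BJJS_proposition_1} for the two cases beginning with $1$, and summing the four values for the net-contribution claim. The details you supply---$k=r$ in the corollary, the $\sigma=1$ versus $\sigma>1$ branches, and $\mobfn{\varepsilon}{\pi_{>1}}=0$---are exactly the verifications the paper leaves implicit.
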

\begin{proof}
    If $\pi = \sumra$
    or $\pi = \sumrab \plusone$,
    then this is immediate from Corollary~\ref{BJJS_corollary_3}.
    If $\pi = \oneplus \sumrab$
    or $\pi = \oneplus \sumrab \plusone$,    
    then we use Proposition~\ref{BJJS_proposition_1}.
    
    For the net contribution of $\familysum{\sumra}$,
    $\sum_{\lambda \in \familysum{\sumra}} \mobfn{\sigma}{\lambda} = 0$.
\end{proof}
We now have a lemma that adds a further restriction to
the permutations that have a non-zero
contribution to the \mob sum.
\begin{lemma}
    \label{lemma_only_r_and_rplusone_count}
    If $\sigma \leq \pi$, 
    and $\alpha \in [\sigma, \pi]$ is sum indecomposable,
    and $r$ is the smallest integer such that 
    $\oneplus \sumrab \plusone \not\leq \pi$, then
    $\familysum{\nsums{k}{\alpha}} \subseteq [\sigma, \pi)$
    for all $k \in [1, r)$.
\end{lemma}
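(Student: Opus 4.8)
The plan is to show, for each fixed $k \in [1, r)$, that all four permutations in $\familysum{\nsums{k}{\alpha}}$ lie in the half-open interval $[\sigma, \pi)$. First I would reduce to the two extreme members under containment: since prepending and/or appending a fixed point only enlarges a permutation, every $\mu \in \familysum{\nsums{k}{\alpha}}$ satisfies $\nsums{k}{\alpha} \le \mu \le \oneplus \left( \nsums{k}{\alpha} \right) \plusone$. Consequently it suffices to prove the single lower bound $\sigma \le \nsums{k}{\alpha}$ and the single upper bound $\oneplus \left( \nsums{k}{\alpha} \right) \plusone < \pi$; the former then gives $\sigma \le \mu$ and the latter $\mu < \pi$ for all four members simultaneously.

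The lower bound is immediate. As $\alpha \in [\sigma, \pi]$ we have $\sigma \le \alpha$, and for any $k \ge 1$ a single copy of $\alpha$ embeds into $k$ copies, so $\alpha \le \nsums{k}{\alpha}$; transitivity yields $\sigma \le \nsums{k}{\alpha}$.

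For the upper bound I would exploit the minimality of $r$. By definition no index below $r$ has the defining failure property, so $\oneplus \left( \nsums{j}{\alpha} \right) \plusone \le \pi$ for every $j$ with $1 \le j < r$; the choice $j = k$ gives $\oneplus \left( \nsums{k}{\alpha} \right) \plusone \le \pi$ at once, and hence $\mu \le \pi$ for every member. The work is in promoting this to a strict inequality. When $k \le r - 2$ this is routine: then $k + 1 < r$, so $\oneplus \left( \nsums{k+1}{\alpha} \right) \plusone \le \pi$ as well, while $\oneplus \left( \nsums{k}{\alpha} \right) \plusone < \oneplus \left( \nsums{k+1}{\alpha} \right) \plusone$ because the right-hand permutation contains the left one and is strictly longer (it carries one further copy of $\alpha$, and $\abs{\alpha} \ge 1$). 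Chaining the two relations gives $\oneplus \left( \nsums{k}{\alpha} \right) \plusone < \pi$, as needed.

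I expect the main obstacle to be the top of the range, $k = r - 1$. There the natural witness for strictness would be $\oneplus \left( \nsums{k+1}{\alpha} \right) \plusone = \oneplus \sumrab \plusone$, which is precisely the permutation that, by the definition of $r$, fails to embed in $\pi$; so the length-based argument is no longer available. Strictness then reduces exactly to showing $\pi \neq \oneplus \left( \nsums{r-1}{\alpha} \right) \plusone$, for if these coincided the family $\familysum{\nsums{r-1}{\alpha}}$ would contain $\pi$ itself and fail to sit inside $[\sigma, \pi)$. Resolving this boundary case — either by invoking an additional structural property of $\pi$ (such as its exclusion from being the identity or its reverse) together with the indecomposability of $\alpha$, or by confirming that the intended range is effectively $k \le r-2$ — is the step I would treat with the most care, since here a comparison of lengths alone is not decisive.
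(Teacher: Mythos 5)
Your argument is the same as the paper's. The paper's entire proof is the single chain $\sigma \leq \nsums{k}{\alpha} < \oneplus\left(\nsums{k}{\alpha}\right)\plusone \leq \pi$ for $k < r$: the first containment is your lower-bound step, the strict middle containment is your reduction to the extreme members of the family, and the final ``$\leq \pi$'' is your appeal to the minimality of $r$. Your promotion of $\leq$ to $<$ when $k \leq r-2$, by comparing against the strictly larger permutation $\oneplus\left(\nsums{k+1}{\alpha}\right)\plusone \leq \pi$, is correct and is more care than the paper takes.

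The boundary case $k = r-1$ that you refused to wave through is a genuine defect --- of the lemma, not of your attempt. The paper's chain ends in ``$\leq \pi$'', yet the conclusion asserts containment in the half-open interval $[\sigma,\pi)$; nothing rules out the top family member equalling $\pi$. Indeed, take $\sigma = \alpha = \dtwo$ and $\pi = \oneplus \dtwo \plusone = 1324$. All hypotheses hold, and $r = 2$, because $\oneplus \dtwo \plusone = \pi \leq \pi$ while $\oneplus\left(\nsums{2}{\dtwo}\right)\plusone$ has length $6 > \order{\pi}$. But then for $k = 1$ the family $\familysum{\dtwo}$ contains $\oneplus \dtwo \plusone = \pi$ itself, so $\familysum{\dtwo} \not\subseteq [\sigma,\pi)$, and no argument can close the gap you identified without an extra hypothesis such as $\pi \neq \oneplus\left(\nsums{r-1}{\alpha}\right)\plusone$, or a conclusion weakened to $\subseteq [\sigma,\pi]$. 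The slip is not cosmetic: in this example the three members of $\familysum{\dtwo}$ that do lie in $[\sigma,\pi)$ contribute $1 - 1 - 1 = -1$, not $0$, to the \mob sum (consistent with $\mobfn{\dtwo}{1324} = 1$), so the ``no net contribution'' remark the paper attaches to this lemma also fails in this boundary case. Your proposal is as complete as the statement permits, and your diagnosis of where and why it cannot be completed is exactly right.
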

\begin{proof}
        For any $k < r$, 
        $\sigma \leq 
        \nsums{k}{\alpha} < 
        \oneplus \left(\nsums{k}{\alpha}\right) \plusone \leq
        \pi$.    
        Note that by Lemma~\ref{lemma_pi_has_single_block}
        the net contribution of the family 
        $\familysum{\nsums{k}{\alpha}}$ to $\mobfn{\sigma}{\pi}$ is zero.
\end{proof}
\begin{observation}
    \label{observation_only_r_and_rplusone_count}
    Using the same terminology as 
    Lemma~\ref{lemma_only_r_and_rplusone_count}, if $k > r+1$
    then we must have $\nsums{k}{\alpha} \not\leq \pi$.
    As a consequence, for each indecomposable 
    $\alpha \in [\sigma, \pi]$, the only families of $\alpha$
    that can have a non-zero net contribution
    to $\mobfn{\sigma}{\pi}$ are
    $\familysum{\sumra}$ 
    and $\familysum{\nsums{r+1}{\alpha}}$.
\end{observation}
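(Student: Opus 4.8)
The plan is to prove the displayed non-containment first, and then read off the stated consequence by sorting the families $\familysum{\nsums{k}{\alpha}}$ into three ranges of the index $k$ and invoking the two preceding lemmas.

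For the non-containment I would use monotonicity of the direct sum under pattern containment: if $\beta \leq \beta'$ and $\gamma \leq \gamma'$ then $\beta \oplus \gamma \leq \beta' \oplus \gamma'$, since one may embed each summand independently into the corresponding block. As $\alpha > 1$ we have $1 \leq \alpha$, and therefore
\[
\oneplus \sumrab \plusone \leq \alpha \oplus \sumrab \oplus \alpha = \nsums{r+2}{\alpha}.
\]
By the choice of $r$ in Lemma~\ref{lemma_only_r_and_rplusone_count} we have $\oneplus \sumrab \plusone \not\leq \pi$, so transitivity of containment forces $\nsums{r+2}{\alpha} \not\leq \pi$. For any $k > r+1$, that is $k \geq r+2$, we have $\nsums{r+2}{\alpha} \leq \nsums{k}{\alpha}$; were $\nsums{k}{\alpha} \leq \pi$ to hold, transitivity would give $\nsums{r+2}{\alpha} \leq \pi$, a contradiction. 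Hence $\nsums{k}{\alpha} \not\leq \pi$ whenever $k > r+1$.

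For the consequence I would split the families $\familysum{\nsums{k}{\alpha}}$ with $k \geq 1$ into three cases. When $1 \leq k < r$, Lemma~\ref{lemma_only_r_and_rplusone_count} gives $\familysum{\nsums{k}{\alpha}} \subseteq [\sigma, \pi)$, and then Lemma~\ref{lemma_pi_has_single_block} shows each such family makes no net contribution to $\mobfn{\sigma}{\pi}$. When $k \geq r+2$, the non-containment just proved gives $\nsums{k}{\alpha} \not\leq \pi$; since every member of $\familysum{\nsums{k}{\alpha}}$ contains $\nsums{k}{\alpha}$ (the members are obtained from $\nsums{k}{\alpha}$ by prepending and/or appending a single point), no member lies in $[\sigma, \pi)$, so the family contributes nothing. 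The only surviving indices are $k = r$ and $k = r+1$, giving precisely the two families $\familysum{\sumra}$ and $\familysum{\nsums{r+1}{\alpha}}$.

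The argument is essentially routine; the only point needing care is the monotonicity step for $\oplus$. As a sanity check I would note that these two surviving families genuinely need not vanish: for $k = r$ the top member $\oneplus \sumrab \plusone$ already fails to embed in $\pi$ by the definition of $r$, so $\familysum{\sumra}$ is not contained in $[\sigma,\pi)$ and the hypothesis of Lemma~\ref{lemma_pi_has_single_block} does not apply. I would also record that the standing exclusion of the decreasing permutation as an upper bound guarantees $1 \oplus 1 \leq \pi$, hence $r \geq 1$, so the indices $r$ and $r+1$ are well defined.
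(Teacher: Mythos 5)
Your proof is correct and is essentially the intended argument: the paper states this as an Observation without proof, and the justification it relies on is exactly your chain $\oneplus \sumrab \plusone \leq \nsums{r+2}{\alpha} \leq \nsums{k}{\alpha}$ (monotonicity of $\oplus$ plus transitivity), combined with Lemmas~\ref{lemma_only_r_and_rplusone_count} and~\ref{lemma_pi_has_single_block} for the indices $k<r$ and emptiness of the family's intersection with $[\sigma,\pi)$ for $k>r+1$. Your closing remarks, that $r \geq 1$ because the decreasing permutation is excluded as an upper bound and that the two surviving families escape Lemma~\ref{lemma_pi_has_single_block} precisely because $\oneplus \sumrab \plusone \not\leq \pi$, are also sound.
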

We now eliminate two specific permutations from the \mob sum.
\begin{lemma}
    \label{lemma_order_greater_than_three_ignore_o_opo}
    If $\pi$ is any permutation with $\order{\pi} > 3$
    apart from the identity permutation and its reverse,
    and $\sigma$ is sum indecomposable,
    then the permutations $1$ and $\oneplusone$ make no net contribution
    to the \mob sum $\mobfn{\sigma}{\pi}$.
\end{lemma}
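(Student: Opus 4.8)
The plan is to observe that the quantity $\mobfn{\sigma}{1} + \mobfn{\sigma}{\oneplusone}$ does not depend on $\pi$ at all, so the real content is a finite base-case computation; the hypotheses on $\pi$ serve only to guarantee that the relevant elements actually lie in the half-open interval $[\sigma,\pi)$, making the phrase ``net contribution'' meaningful. Before the computation I would record \emph{why} this case cannot simply be absorbed into Lemma~\ref{lemma_pi_has_single_block}: that lemma requires $\alpha > 1$, and for $\alpha = 1$ the family degenerates, since $\oneplus \alpha = \alpha \plusone$ collapses $\familysum{1}$ to $\{1, \oneplusone, 1\oplus 1 \oplus 1\}$ and the four-term cancellation no longer applies. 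Hence $1$ and $\oneplusone$ must be dealt with by hand, which is exactly what this lemma does.

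I would then split on $\sigma$. If $\sigma > 1$, then $\sigma$ is sum indecomposable of length at least two; since the only sum-indecomposable permutation contained in $\oneplusone = 12$ is $1$ (its sole length-two pattern, $12$ itself, is decomposable), we have $\sigma \nleq 1$ and $\sigma \nleq \oneplusone$. Therefore $\mobfn{\sigma}{1} = \mobfn{\sigma}{\oneplusone} = 0$, and neither element even appears in the sum defining $\mobfn{\sigma}{\pi}$; their joint (indeed individual) contribution is zero.

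The remaining case is $\sigma = 1$. Here $[1,\oneplusone) = \{1\}$, so directly from Equation~\ref{equation_mobius_function} we get $\mobfn{1}{\oneplusone} = -\mobfn{1}{1} = -1$, while $\mobfn{1}{1} = 1$ by definition, giving $\mobfn{1}{1} + \mobfn{1}{\oneplusone} = 0$. (The same value $-1$ also drops out of Proposition~\ref{BJJS_proposition_1} with $k = 2$ and $l = 1$, if one prefers to stay inside the recursion.) To confirm these two contributions genuinely occur in $\mobfn{1}{\pi}$, I would verify $\{1, \oneplusone\} \subseteq [1,\pi)$: since $\order{\pi} > 3$ we have $1 < \pi$ and $\oneplusone \neq \pi$, and since $\pi$ is excluded from being the reverse of the identity it contains an ascent, so $\oneplusone \leq \pi$; together these give $\oneplusone < \pi$.

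There is no serious obstacle here, as this is a degenerate base case isolated precisely because the general family-cancellation argument breaks down at $\alpha = 1$. The only point needing care is the bookkeeping about membership in $[\sigma,\pi)$ in the case $\sigma = 1$: the exclusion of the reverse identity is exactly what forces $\oneplusone \leq \pi$, ensuring that the cancellation $1 + (-1) = 0$ is not vacuous.
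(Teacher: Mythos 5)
Your proof is correct and follows essentially the same route as the paper's: split on $\sigma = 1$ versus $\sigma > 1$, compute $\mobfn{1}{1} = 1$ and $\mobfn{1}{\oneplusone} = -1$ for cancellation in the first case, and observe that neither $1$ nor $\oneplusone$ lies in the interval in the second. In fact you are slightly more careful than the paper, which asserts without comment that the interval contains $\oneplusone$ when $\sigma = 1$; your explicit use of the reverse-identity exclusion to guarantee an ascent (hence $\oneplusone < \pi$) fills in that small gap.
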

\begin{proof}
    If $\sigma = 1$, then the interval contains
    both $1$ and $\oneplusone$.  Since 
    $\mobfn{1}{1} = 1$ and $\mobfn{1}{12} = -1$,
    there is no net contribution to $\mobfn{\sigma}{\pi}$.
    If $\sigma > 1$, then $\sigma \neq 12$,
    and so neither $1$ not $12$ is in the interval.
\end{proof}

Before we present the main theorem for this section,
we formally define the weight function
and the contributing set.
Let $\alpha$ be a sum indecomposable permutation.
The weight function, $\weightgen{\sigma}{\alpha}{\pi}$,
is defined as 
\begin{align}
\label{equation_general_weight_function}
\weightgen{\sigma}{\alpha}{\pi}
& =
\begin{cases*}
1 & 
If
$
\left\lbrace
\begin{array}{l}
\sigma \leq \sumra \leq \pi
\text{ and } \\
\oneplus \sumrab \not \leq \pi
\text{ and } \\
\sumrab \plusone \not \leq \pi,
\end{array}
\right.
$
\\
-1 & 
If
$
\left\lbrace
\begin{array}{l}
\sigma \leq \sumra \leq \pi
\text{ and } \\
\oneplus \sumrab \leq \pi
\text{ and } \\
\sumrab \plusone \leq \pi 
\text{ and } \\
\nsums{r+1}{\alpha} \not\leq \pi,
\end{array}
\right.
$
\\
0 &
Otherwise,
\end{cases*}
\end{align}
where $r$ is the smallest integer 
such that $\oneplus \sumrab \plusone \not \leq \pi$.

The contributing set $\contrib{\sigma}{\pi}$ 
is defined as
\begin{align*}
\contrib{\sigma}{\pi}
& = 
\left\lbrace
\alpha :
\begin{array}{l}
\alpha \in [\sigma, \pi), \\
\alpha \text{ is sum indecomposable, and } \\
\weightgen{\sigma}{\alpha}{\pi} \neq 0
\end{array}
\right\rbrace.
\end{align*}

We have one last lemma before we move on
to the main theorem.
\begin{lemma}
    \label{lemma_weight_of_alpha}
    If $\sigma$ and $\alpha$ are sum indecomposable,
    then for any permutation $\pi$,
    $\mobfn{\sigma}{\alpha}\weightgen{\sigma}{\alpha}{\pi}$
    gives the contribution of the set of 
    families
    $\familysum{\sumra}$ to the \mob sum,
    where $r$ is any positive integer.
\end{lemma}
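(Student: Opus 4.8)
The plan is to turn the statement into a single signed count and then read that count off against the three clauses defining $\weightgen{\sigma}{\alpha}{\pi}$. First I would use Lemma~\ref{lemma_pi_has_single_block} to label the four members of each family: for every $j\geq 1$ we have $\mobfn{\sigma}{\nsums{j}{\alpha}}=\mobfn{\sigma}{\oneplus(\nsums{j}{\alpha})\plusone}=\mobfn{\sigma}{\alpha}$ and $\mobfn{\sigma}{\oneplus(\nsums{j}{\alpha})}=\mobfn{\sigma}{(\nsums{j}{\alpha})\plusone}=-\mobfn{\sigma}{\alpha}$. Writing $a_j,b_j,d_j,e_j\in\{0,1\}$ for the indicators that, respectively, $\nsums{j}{\alpha}$, $\oneplus(\nsums{j}{\alpha})$, $(\nsums{j}{\alpha})\plusone$, $\oneplus(\nsums{j}{\alpha})\plusone$ belong to $[\sigma,\pi)$, the combined contribution of all the families is $\mobfn{\sigma}{\alpha}\cdot S$, where $S=\sum_{j\geq 1}\bigl(a_j-b_j-d_j+e_j\bigr)$. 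So it suffices to prove $S=\weightgen{\sigma}{\alpha}{\pi}$. The degenerate situations are immediate: if $\sigma>1$ and $\sigma\not\leq\alpha$ then, since $\sigma$ is sum indecomposable, it embeds in no member of any $\familysum{\nsums{j}{\alpha}}$, so every indicator and also $\weightgen{\sigma}{\alpha}{\pi}$ is $0$; the same happens if $\alpha\not\leq\pi$. Hence I may assume $\sigma\leq\alpha\leq\pi$.

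Next I would localise $S$. Let $r$ be the smallest positive integer with $\oneplus\sumrab\plusone\notin[\sigma,\pi)$, which is the integer occurring in the weight, with the containment tests there read inside the half-open interval $[\sigma,\pi)$. For $j<r$ all four members of $\familysum{\nsums{j}{\alpha}}$ lie in $[\sigma,\pi)$ by Lemma~\ref{lemma_only_r_and_rplusone_count}, so each such family contributes $1-1-1+1=0$; for $j>r+1$ the smallest member $\nsums{j}{\alpha}$ already lies outside $[\sigma,\pi)$ by Observation~\ref{observation_only_r_and_rplusone_count}, so each contributes $0$. Thus only $j\in\{r,r+1\}$ remain. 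By the choice of $r$ we have $e_r=0$. Moreover each of $\oneplus(\nsums{r+1}{\alpha})$, $(\nsums{r+1}{\alpha})\plusone$ and $\oneplus(\nsums{r+1}{\alpha})\plusone$ contains $\oneplus\sumrab\plusone$, by the same block-embedding arguments underlying Observation~\ref{observation_only_r_and_rplusone_count}; since $\oneplus\sumrab\plusone\notin[\sigma,\pi)$, these three permutations lie outside $[\sigma,\pi)$ too, giving $b_{r+1}=d_{r+1}=e_{r+1}=0$. Collecting terms, $S=a_r-b_r-d_r+a_{r+1}$.

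It then remains to evaluate these four indicators. From $\nsums{r}{\alpha}\leq\oneplus\sumrab\leq\nsums{r+1}{\alpha}$ and $\nsums{r}{\alpha}\leq\sumrab\plusone\leq\nsums{r+1}{\alpha}$ I obtain the nesting $a_{r+1}\leq b_r\leq a_r$ and $a_{r+1}\leq d_r\leq a_r$. If $a_r=0$ then all four indicators vanish and $S=0$, while correspondingly the first requirement $\sumra\in[\sigma,\pi)$ of both non-zero clauses fails, so $\weightgen{\sigma}{\alpha}{\pi}=0$. If $a_r=1$ I would run through the tuples $(b_r,d_r,a_{r+1})$ allowed by the nesting: $(0,0,0)$ gives $S=1$; $(1,1,0)$ gives $S=-1$; and $(0,1,0),(1,0,0),(1,1,1)$ all give $S=0$. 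Comparing with the definition, $S=1$ is exactly the clause $\sumra\in[\sigma,\pi)$ with $\oneplus\sumrab,\sumrab\plusone\notin[\sigma,\pi)$ (weight $1$); $S=-1$ is exactly $\sumra,\oneplus\sumrab,\sumrab\plusone\in[\sigma,\pi)$ with $\nsums{r+1}{\alpha}\notin[\sigma,\pi)$ (weight $-1$); and every other tuple falls under ``otherwise'' (weight $0$). This gives $S=\weightgen{\sigma}{\alpha}{\pi}$, which is the assertion.

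The step I expect to need the most care is the boundary bookkeeping, and it is precisely why I phrase every test as membership of the half-open interval $[\sigma,\pi)$ rather than as containment $\leq\pi$. The sensitive inputs are the permutations $\pi$ that are themselves members of a family of $\alpha$, that is $\pi\in\familysum{\nsums{R}{\alpha}}$: these are exactly the $\pi$ at which Lemma~\ref{lemma_pi_has_single_block} already pins down $\mobfn{\sigma}{\pi}=\pm\mobfn{\sigma}{\alpha}$, and they are the cases where a relation $\lambda\leq\pi$ would wrongly count $\lambda=\pi$. I would verify these directly against the reduction; for instance $\pi=\oneplus(\nsums{R}{\alpha})\plusone$ makes the minimal index equal to $R$ once the excluded endpoint is accounted for, leaves $\nsums{R}{\alpha}$, $\oneplus(\nsums{R}{\alpha})$ and $(\nsums{R}{\alpha})\plusone$ inside $[\sigma,\pi)$ but pushes $\nsums{R+1}{\alpha}$ out, so $S=1-1-1+0=-1$, matching both $\weightgen{\sigma}{\alpha}{\pi}=-1$ and $\mobfn{\sigma}{\pi}=\mobfn{\sigma}{\alpha}$; the cases $\pi=\nsums{R}{\alpha}$, $\pi=\oneplus(\nsums{R}{\alpha})$ and $\pi=(\nsums{R}{\alpha})\plusone$ are treated identically. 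Once the half-open convention is applied uniformly to the minimal index $r$ and to every inequality appearing in the weight, all of these boundary instances collapse into the finite check of the previous paragraph.
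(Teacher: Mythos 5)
Your proof is correct, and structurally it is the same argument as the paper's: restrict attention to the families $\familysum{\sumra}$ and $\familysum{\nsums{r+1}{\alpha}}$, evaluate the surviving members by Lemma~\ref{lemma_pi_has_single_block}, and run a finite case analysis that is matched clause-by-clause against the weight function --- your five tuples $(b_r,d_r,a_{r+1})$ are exactly the five rows of Table~\ref{table_family_members_in_an_interval}. The one genuine difference is your decision to read every containment test inside the half-open interval $[\sigma,\pi)$, and this difference is in your favour: it is not mere bookkeeping but a correction the statement needs. The paper defines $\weightgen{\sigma}{\alpha}{\pi}$ (and its table) with plain ``$\leq\pi$'' conditions while the \mob sum runs over $[\sigma,\pi)$, and read literally the lemma fails exactly on your ``sensitive inputs'', where $\pi$ itself belongs to a family. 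Concretely, take $\sigma=1$, $\alpha=\dtwo$, $\pi=2143$: then $r=1$ (since $1324\not\leq 2143$), the families of $\alpha$ contribute $\mobfn{1}{\dtwo}+\mobfn{1}{132}+\mobfn{1}{213}=-1+1+1=+1$ to the sum over $[1,\pi)$, yet the literal weight is $0$ because the clause $\nsums{2}{\dtwo}\not\leq\pi$ fails through the equality $\nsums{2}{\dtwo}=\pi$; under your convention $\nsums{2}{\dtwo}\notin[\sigma,\pi)$ holds, the weight is $-1$, and $\mobfn{1}{\dtwo}\cdot(-1)=+1$ is the correct contribution. The paper's own proof implicitly commits to your reading (its table tallies contributions as though every permutation counted as ``$\leq\pi$'' lay strictly below $\pi$), so your final paragraph checking the boundary cases $\pi\in\familysum{\nsums{R}{\alpha}}$ supplies precisely the verification the paper omits; note also that the same half-open reading is what makes your citations of Lemma~\ref{lemma_only_r_and_rplusone_count} and Observation~\ref{observation_only_r_and_rplusone_count} legitimate, since with your $r$ their proofs go through verbatim while their literal statements have the same endpoint defect.

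One caveat, which you share with the paper: the labelling step rests on Lemma~\ref{lemma_pi_has_single_block}, which is stated only for $\alpha>1$, and for $\alpha=1$ the families consist of increasing permutations, overlap one another, and do not obey that labelling (e.g.\ $\mobfn{1}{123}=0$). That case needs a separate, easy argument (via Lemmas~\ref{lemma_mobius_function_is_zero} and~\ref{lemma_order_greater_than_three_ignore_o_opo}, both sides are $0$); since the paper's proof has the identical gap, it does not count against your attempt.
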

\begin{proof}
    By Observation~\ref{observation_only_r_and_rplusone_count},
    we only need consider the contribution made
    by $\sumra$ and $\nsums{r+1}{\alpha}$,
    where $r$ is the smallest integer 
    such that $\oneplus \sumrab \plusone \not \leq \pi$.
    
    If $\sigma \not \leq \sumra$, 
    or $\sumra \not \leq \pi$, then
    $\familysum{\sumra}$ makes no net contribution 
    to the \mob sum.  Now assume that
    $\sigma \leq \sumra \leq \pi$.
    First, we can see that
    if $\oneplus \sumrab \not \leq \pi$,
    or $\sumrab \plusone \not \leq \pi$
    then $\nsums{r+1}{\alpha} \not \leq \pi$.
    We can also see that 
    if $\oneplus \sumrab \plusone \not \leq \pi$
    then $\oneplus \left(\nsums{r+1}{\alpha}\right) \not \leq \pi$
    and $\nsums{r+1}{\alpha} \not \leq \pi$.
    The possibilities remaining are itemised
    in Table~\ref{table_family_members_in_an_interval},
    \begin{table}[btp]
        \centering
        \begin{tabular}{cccc}
            \toprule
            $\oneplus \sumrab$ &
            $\sumrab \plusone$ &
            $\nsums{r+1}{\alpha}$ &
            \mob contribution \\    
            \midrule
            $\leq \pi$ &
            $\leq \pi$ &
            $\leq \pi$ &
            $0$
            \\    
            $\leq \pi$ &
            $\leq \pi$ &
            $\not \leq \pi$ &
            $- \mobfn{\sigma}{\alpha}$
            \\    
            $\leq \pi$ &
            $\not \leq \pi$ &
            $\not \leq \pi$ &
            $0$
            \\    
            $\not \leq \pi$ &
            $\leq \pi$ &
            $\not \leq \pi$ &
            $0$
            \\    
            $\not \leq \pi$ &
            $\not \leq \pi$ &
            $\not \leq \pi$ &
            $\mobfn{\sigma}{\alpha}$
            \\    
            \bottomrule
        \end{tabular}    
        \caption{\mob contribution from family members.}
        \label{table_family_members_in_an_interval}
    \end{table}      
    where the \mob contribution is determined
    by applying 
    Lemma~\ref{lemma_pi_has_single_block}.    
    We can see that in every case 
    $\weightgen{\sigma}{\alpha}{\pi}$ provides the
    correct weight for the \mob function
    $\mobfn{\sigma}{\alpha}$.
\end{proof}

We are now in a position 
to present the main theorem for this section.
\begin{theorem}
\label{theorem_mobius_sum_bottom_level_indecomposable}
    If $\sigma$ is a sum indecomposable permutation,
    and $\order{\pi} > 3$,
    then
    \[
    \mobfn{\sigma}{\pi} 
    = 
    - \sum_{\alpha \in \contrib{\sigma}{\pi}}
    \mobfn{\sigma}{\alpha} \weightgen{\sigma}{\alpha}{\pi} .
    \] 
\end{theorem}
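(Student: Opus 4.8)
The plan is to compute $\sum_{\lambda\in[\sigma,\pi)}\mobfn{\sigma}{\lambda}$ directly from the defining recurrence and to show that, after discarding every block of terms that cancels, what survives is precisely $\sum_{\alpha\in\contrib{\sigma}{\pi}}\mobfn{\sigma}{\alpha}\weightgen{\sigma}{\alpha}{\pi}$; negating then yields the stated identity. All of the genuine combinatorics has already been packaged into the preceding lemmas, so the argument is essentially one of organising the interval $[\sigma,\pi)$ into disjoint classes and invoking the appropriate lemma on each.

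The central step is a classification: every $\lambda\in[\sigma,\pi)$ lies in exactly one of three classes — (i) $\lambda\in\{1,\oneplusone\}$; (ii) $\mobfn{\sigma}{\lambda}=0$; or (iii) $\lambda\in\familysum{\nsums{k}{\alpha}}$ for a unique sum-indecomposable $\alpha>1$ and a unique $k\geq 1$. To establish this I would take $\lambda\notin\{1,\oneplusone\}$ and examine its decomposition into sum-indecomposable blocks. If $\lambda$ begins or ends with at least two blocks equal to $1$, the first two cases of Lemma~\ref{lemma_mobius_function_is_zero} place it in class (ii). Otherwise $\lambda$ has at most one leading and one trailing block equal to $1$; removing these yields a core $c>1$ whose first and last blocks are not $1$. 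If $c=\nsums{k}{\alpha}$ is a pure power of a single indecomposable $\alpha>1$, then $\lambda$ is one of the four members of $\familysum{\nsums{k}{\alpha}}$, and uniqueness of the block decomposition forces $\alpha$, $k$, and the particular family member; this is class (iii). If instead $c$ is not a pure power, then writing $\alpha$ for its first block and taking the maximal initial run of $\alpha$'s gives $c=\nsums{k}{\alpha}\oplus\tau'$ with $\tau'>1$ (its last block is not $1$), so $\lambda\in\familysum{\nsums{k}{\alpha}\oplus\tau'}$ and the third case of Lemma~\ref{lemma_mobius_function_is_zero} again gives class (ii).

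With the classification in hand the sum reorganises cleanly. Class (ii) terms vanish individually, and by Lemma~\ref{lemma_order_greater_than_three_ignore_o_opo} the class (i) pair $\{1,\oneplusone\}$ makes no net contribution (this is where $\order{\pi}>3$ is used). Hence
\[
\sum_{\lambda\in[\sigma,\pi)}\mobfn{\sigma}{\lambda}
=\sum_{\alpha}\ \sum_{k\geq 1}\ \sum_{\lambda\in\familysum{\nsums{k}{\alpha}}\cap[\sigma,\pi)}\mobfn{\sigma}{\lambda},
\]
the outer sum ranging over sum-indecomposable $\alpha>1$ and the families being pairwise disjoint. For fixed $\alpha$, Lemma~\ref{lemma_only_r_and_rplusone_count} and Observation~\ref{observation_only_r_and_rplusone_count} show that, with $r$ the smallest integer for which $\oneplus\sumrab\plusone\not\leq\pi$, every family with $k<r$ sits entirely inside $[\sigma,\pi)$ and so contributes zero by Lemma~\ref{lemma_pi_has_single_block}, while every family with $k>r+1$ lies outside $\pi$; only $k=r$ and $k=r+1$ can survive. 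Lemma~\ref{lemma_weight_of_alpha} then evaluates the net contribution of the $\alpha$-families to be exactly $\mobfn{\sigma}{\alpha}\weightgen{\sigma}{\alpha}{\pi}$. Summing over $\alpha$ and noting that $\weightgen{\sigma}{\alpha}{\pi}=0$ unless $\alpha\in\contrib{\sigma}{\pi}$ collapses the sum onto the contributing set, giving $\sum_{\lambda\in[\sigma,\pi)}\mobfn{\sigma}{\lambda}=\sum_{\alpha\in\contrib{\sigma}{\pi}}\mobfn{\sigma}{\alpha}\weightgen{\sigma}{\alpha}{\pi}$; negating completes the proof.

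The step I expect to be hardest is the classification: one must verify that the three cases of Lemma~\ref{lemma_mobius_function_is_zero} cover precisely the complement of $\{1,\oneplusone\}$ together with the pure-power families, so that no non-zero term is dropped and none is double-counted. The delicate points are distinguishing a single leading or trailing $1$ (which keeps $\lambda$ inside a pure-power family) from two or more (which forces $\mobfn{\sigma}{\lambda}=0$), confirming that the residual part $\tau'$ after the initial run always satisfies $\tau'>1$ so that a genuinely mixed core does fall into the third case, and correctly handling the boundary families whose generator coincides with $\pi$ so that the accounting of Lemma~\ref{lemma_weight_of_alpha} matches actual interval membership. Once the partition is shown to be exact, the remaining steps are a mechanical assembly of the accounting lemmas.
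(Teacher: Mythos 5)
Your proposal is correct and follows essentially the same route as the paper's own proof: discard the pair $\{1, \oneplusone\}$ via Lemma~\ref{lemma_order_greater_than_three_ignore_o_opo}, discard everything outside the pure-power families $\familysum{\sumra}$ via Lemma~\ref{lemma_mobius_function_is_zero}, and evaluate the total contribution of each $\alpha$'s families via Lemma~\ref{lemma_weight_of_alpha}. The only difference is one of detail: your block-decomposition classification of $[\sigma,\pi)$, including the uniqueness of the family containing a given $\lambda$, is exactly the argument the paper leaves implicit in its one-line appeal to those lemmas.
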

\begin{proof}
    Let $\alpha \leq \pi$ be an indecomposable permutation.
    
    Using Lemmas~\ref{lemma_mobius_function_is_zero}
    and~\ref{lemma_order_greater_than_three_ignore_o_opo}
    we can see that any permutations not in the set
    $\familysum{\sumra}$
    can be excluded from $\contrib{\sigma}{\pi}$, as these permutations 
    make no net contribution to the \mob sum.
    
    For every $\alpha$, 
    by Lemma~\ref{lemma_weight_of_alpha},
    $\mobfn{\sigma}{\alpha} \weightgen{\sigma}{\alpha}{\pi}$
    provides the contribution to the \mob sum
    of all families $\familysum{\sumra}$, where
    $r$ is a positive integer.  
\end{proof}
Theorem~\ref{theorem_mobius_sum_bottom_level_indecomposable}
reduces the number of permutations that need to be considered
as part of the \mob sum.  
We can see that the largest permutation in 
$\contrib{\sigma}{\pi}$ must have length less than $\order{\pi}$,
and so we can apply 
Theorem~\ref{theorem_mobius_sum_bottom_level_indecomposable}
recursively to the permutations in $\contrib{\sigma}{\pi}$
to determine their \mob values.
In this recursion, if we are attempting to determine
$\mobfn{\sigma}{\lambda}$, we can stop
if $\order{\sigma} = \order{\lambda}$
or $\order{\sigma} = \order{\lambda} - 1$, as
in these cases $\mobfn{\sigma}{\lambda}$ is $+1$ and $-1$ respectively.

\section{Increasing oscillations}\label{section-increasing-oscillations}

We now move on to increasing oscillations.
Given an indecomposable permutation
$\sigma$, 
and an increasing oscillation $\pi$,
our aim in this section is to describe
$\contrib{\sigma}{\pi}$
in precise terms.  
We will find a sum for the \mob function,
$\mobfn{\sigma}{\pi}$,
which only requires the evaluation of simple inequalities.

If $\pi$ is an increasing oscillation with 
length less than 4, then
$\mobfn{\sigma}{\pi}$ is trivial to determine
for any $\sigma$.
For the remainder of this section we assume that 
$\pi$ has length at least 4.

We partition the set of increasing oscillations
with length greater than 1 
into 
five disjoint subsets.
These subsets are
$\{ \dtwo \},\;$
$\{\ildtwokp \},\;$ 
$\{\oneil \ildtwokb\},\;$ 
$\{\ildtwokb \ilone\},\;$
and
$\{\oneil \ildtwokb \ilone \}$,
where $k$ is a positive integer.
If two increasing oscillations are in the 
same subset, then we
say that they have the same \emph{shape}.

We now determine what 
permutations contained in an increasing oscillation
have a non-zero contribution to the \mob sum.
\begin{lemma}
    \label{lemma_form_of_permutations_in_increasing_oscillations}
    Let $\pi$ be an increasing oscillation,
    and let $\sigma \leq \pi$ be sum indecomposable.
    Let $S$ be the subset of the permutations
    in the interval $[\sigma, \pi)$ that can be 
    written in the form
    $
    \familysum{ \nsums{r}{\familyil{\nils{k}{\dtwo}} } }
    $
    for some $k,r \geq 1$.
    If $\lambda \in [\sigma, \pi)$,
    and $\lambda \not \in S$, then
    $\mobfn{\sigma}{\lambda} = 0$.
\end{lemma}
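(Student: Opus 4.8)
The plan is to prove the contrapositive: if $\lambda \in [\sigma,\pi)$ and $\mobfn{\sigma}{\lambda} \neq 0$, then $\lambda \in S$. The argument rests on two ingredients, the vanishing results already established in Lemmas~\ref{lemma_mobius_function_is_zero} and~\ref{lemma_order_greater_than_three_ignore_o_opo}, and a structural description of what can sit below an increasing oscillation.

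First I would pin down the block structure forced by a non-zero \mob value. Write $\lambda$ in its sum decomposition $\lambda = \lambda^{(1)} \oplus \cdots \oplus \lambda^{(t)}$ into sum-indecomposable blocks. For $\order{\sigma} \geq 2$ the permutations $1$ and $\oneplusone$ have \mob value $0$ trivially (as $\sigma$ does not embed), and for $\sigma = 1$ they are precisely the cancelling pair isolated in Lemma~\ref{lemma_order_greater_than_three_ignore_o_opo}, so these degenerate $\lambda$ need no separate treatment. For every other $\lambda$, Lemma~\ref{lemma_mobius_function_is_zero} forces $\mobfn{\sigma}{\lambda}=0$ whenever $\lambda$ has two or more leading $1$'s (the $\oneplus\oneplus\tau$ case), two or more trailing $1$'s (the $\tau\plusone\plusone$ case), or, after stripping at most one $1$ from each end, a core of the form $\sumra \oplus \tau'$ with $\alpha$ indecomposable, $r$ maximal, and $\tau' > 1$ (the $\familysum{\sumrab \oplus \tau'}$ case). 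Running through these cases, $\mobfn{\sigma}{\lambda}\neq0$ forces the core of $\lambda$ to consist of $r$ copies of a single indecomposable permutation $\alpha$ of length at least $2$; that is, $\lambda \in \familysum{\sumra}$ for some such $\alpha$ and some $r \geq 1$. The one point needing care is checking that no interior $1$-block can survive, but this again collapses into the third case of Lemma~\ref{lemma_mobius_function_is_zero} (any surviving interior or excess $1$ can be absorbed into a $\tau' > 1$).

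Next I would identify $\alpha$. Since $\lambda \leq \pi$ and $\pi$ is an increasing oscillation, $\alpha$ is a sum-indecomposable pattern contained in an increasing oscillation, and the key structural claim is that any such pattern is itself an increasing oscillation. Granting this, $\alpha$ is an increasing oscillation of length at least $2$, so by the earlier observation that every increasing oscillation lies in $\familyil{\nils{k}{\dtwo}}$ for some $k \geq 1$, we have $\alpha \in \familyil{\nils{k}{\dtwo}}$. Hence $\lambda \in \familysum{\sumra} \subseteq \familysum{\nsums{r}{\familyil{\nils{k}{\dtwo}}}}$, and since $\lambda \in [\sigma,\pi)$ this gives $\lambda \in S$, as required.

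The main obstacle is the structural claim that a sum-indecomposable permutation contained in the increasing oscillating sequence must itself be an increasing oscillation. I would establish this directly from the explicit form of the sequence $4,1,6,3,8,5,\dots$: fix an embedding of the pattern by an increasing set of positions, and observe that if any proper prefix of the chosen points occupied a contiguous block of values relative to the embedding, the pattern would admit a sum decomposition. Sum-indecomposability forbids this for every proper prefix, and the peak/valley alternation of the oscillating sequence then forces the chosen positions to form a contiguous oscillating window, so the pattern is an increasing oscillation. This is the single place where a genuine (if routine) case analysis on the sequence is required; everything else reduces to bookkeeping against Lemmas~\ref{lemma_mobius_function_is_zero} and~\ref{lemma_order_greater_than_three_ignore_o_opo}.
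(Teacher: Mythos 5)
Your reduction is exactly the paper's: dispose of $1$ and $\oneplusone$ via Lemma~\ref{lemma_order_greater_than_three_ignore_o_opo}, use Lemma~\ref{lemma_mobius_function_is_zero} to force any $\lambda$ with $\mobfn{\sigma}{\lambda} \neq 0$ into the form $\familysum{\sumra}$ with $\alpha$ sum indecomposable (your handling of interior $1$-blocks is correct), and then invoke the structural claim that every sum-indecomposable permutation contained in an increasing oscillation is itself an increasing oscillation. The gap is precisely in the one step you flag as a ``routine'' case analysis: your proposed proof of that structural claim does not work, because the statement it tries to establish is false.

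You assert that sum-indecomposability, together with the peak/valley alternation, forces the chosen positions to form a contiguous window of the increasing oscillating sequence. Counterexample: $M_4 = 2413$ is sum indecomposable and is contained in $4,1,6,3,8,5,\ldots$ (take the values $4,6,3,5$ at positions $1,3,4,6$), yet no contiguous window has pattern $2413$, since length-four windows alternate between pattern $3142$ (odd starting position) and $1324$ (even starting position). Moreover, contiguity is the wrong notion altogether: contiguous windows are usually not increasing oscillations, e.g.\ $1,6,3,8$ has pattern $1324 = 1 \oplus \dtwo \oplus 1$ and $4,1,6,3,8$ has pattern $31425 = 3142 \oplus 1$, both sum decomposable; in fact the only sum-indecomposable contiguous-window patterns of length at least two are the $W_{2k}$, so the oscillations $W_{2n-1}$, $M_{2n}$ and $M_{2n-1}$ arise \emph{only} from non-contiguous embeddings. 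Hence no case analysis along your lines can succeed. The paper proves the structural claim by a different mechanism: it shows that deleting any single point from an increasing oscillation yields either an increasing oscillation or a sum of two increasing oscillations (a finite case check on the position of the deleted point), so that, by induction on repeated deletions, every permutation contained in an increasing oscillation is a sum of increasing oscillations, and every sum-indecomposable one is an increasing oscillation. Substituting such a deletion argument (or any correct proof of the structural claim) for your contiguity argument repairs the proposal; the rest of it then goes through.
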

We note here that 
$\familyil{\nils{k}{\dtwo}}$
is a set containing only increasing oscillations. 

\begin{proof}
    We start by showing that if $\pi$ is an increasing oscillation,
    and
    $\lambda = \lambda_1 \oplus \ldots \oplus \lambda_m \leq \pi$,
    where each $\lambda_i$ is sum indecomposable,
    then every $\lambda_i$
    is an increasing oscillation.
    This is trivially true if $\lambda$
    is itself an increasing oscillation,
    thus it is 
    sufficient to show
    that if  
    $\lambda$ is an increasing oscillation, 
    then
    deleting a single point results in either 
    an increasing oscillation, or
    a permutation that is the sum of
    two increasing oscillations.
    
    If $k = 1$, 
    then we can see that deleting a single point
    results in a permutation with the required 
    characteristic.
    
    Now assume that $k > 1$.
    Let $\lambda = \oneil \ildtwokb$.
    Deleting the leftmost point gives $\ildtwok$,
    and deleting the rightmost point gives
    $\oneil \left( \nils{k-1}{\dtwo} \right) \oplus 1$.
    Deleting the second point
    gives
    $\dtwo \oplus \left( \nils{k-1}{\dtwo} \right)$,
    and deleting the last-but-one point
    gives
    $\oneil \left( \nils{k-1}{\dtwo} \right) \ilone$.
    Deleting any even point $2t$ 
    except the second or second-to-last
    results in 
    $
    \left(
    \oneil \left( \nils{t-1}{\dtwo} \right) \ilone 
    \right)
    \oplus
    \left(
    \left( \nils{k-t}{\dtwo} \right)
    \right)
    $.
    Finally, deleting any odd point $2t+1$
    apart from the first or last
    results in
    $
    \left(
    \oneil \left( \nils{t-1}{\dtwo} \right)
    \right)
    \oplus
    \left(
    \oneil \left( \nils{k-t}{\dtwo} \right)
    \right)
    $.
    Thus if $\lambda = \oneil \ildtwokb$,
    then deleting a single point from $\lambda$
    results in either 
    an increasing oscillation, or
    a permutation that is the sum of
    two increasing oscillations.
    
    A similar argument applies to the other three cases,
    which we omit for brevity.
       
    To complete the proof, 
    we now see that by
    Lemma~\ref{lemma_order_greater_than_three_ignore_o_opo},
    we can ignore $\lambda = 1 $ and $\lambda = \oneplusone$.    
    If 
    $\lambda = \lambda_1 \oplus \lambda_2 \oplus \ldots \oplus \lambda_m \leq \pi$,
    then by the argument above, every $\lambda_i$
    is an increasing oscillation.
    Applying Lemma~\ref{lemma_mobius_function_is_zero} 
    completes the proof.
\end{proof}
Following Observation~\ref{observation_only_r_and_rplusone_count},
it is clear that,
if $\alpha \in \familyil{\ildtwok}$,
then for any family $\familysum{\sumra}$,
we only need consider the cases 
$\sumra$ and $\nsums{r+1}{\alpha}$
where
$r$ is the smallest integer such that
$\oneplus \sumrab \plusone \not \leq \pi$.

Given some $\pi = W_n \text{ or } M_n$, we will 
find inequalities that relate $n$, $r$ and $k$ and the
shape of $\alpha$ that will allow us to
find the values that contribute
to the \mob sum.
We know from
Lemma~\ref{lemma_form_of_permutations_in_increasing_oscillations}
the shape of the permutations that contribute to the \mob sum.
For each of the four types of increasing oscillation
($W_{2n}$, $W_{2n-1}$, $M_{2n}$ and $M_{2n-1}$),
we can examine how each shape can be embedded 
so that the unused points at the start of the increasing
oscillation are minimised.  
Figure~\ref{figure_unused_points_at_start_w2n}
shows examples of embeddings into $W_{2n}$.
This gives us an inequality relating to the start
of the embedding.
Similarly, we can find inequalities for the 
end of the embedding.
We can also find inequalities 
that relate to the interior (when $r > 1$), and
Figures~\ref{figure_unused_points_interior_not_21}
and~\ref{figure_unused_points_interior_21}
show examples of this.
We can use these inequalities to determine what
values of $k$ will allow the shape to be embedded.
For each allowable value of $k$, we
can then determine the maximum value of $r$
such that $\oneplus \sumrab \plusone \not \leq \pi$.
This then means that, by evaluating inequalities alone,
we can identify the specific permutations
that could contribute to the \mob sum.

We first have two lemmas that examine
inequalities at the start and end of an embedding.
  
\begin{lemma}
    \label{lemma_unused_points_at_start}
    If $\pi$ is an increasing oscillation,
    and $\alpha \leq \pi$ is sum indecomposable, 
    then in any embedding of an 
    element $\lambda$ of $\familysum{\sumra}$ into $\pi$,
    the minimum number of unused points at the start of $\pi$
    depends on the start of $\lambda$, and on $\pi$,
    and is as shown below:
    
    \centering
    \begin{tabular}{lcccc}
        \toprule
        Start of $\lambda$ & $\pi=W_{2n}$ & $\pi=W_{2n-1}$ & $\pi=M_{2n}$ & $\pi=M_{2n-1}$ \\
        \midrule
        $\dtwo \ldots $ & $0$ & $0$ & $0$ & $0$ \\
        $\ildtwokp \ldots $ & $0$ & $0$ & $1$ & $1$ \\
        $\oneil \left( \ildtwok \right) \ldots $ & $1$ & $1$ & $0$ & $0$ \\
        \midrule
        $\oneplus \dtwo \ldots $ & $1$ & $1$ & $1$ & $1$ \\
        $\oneplus \left( \ildtwokp \right) \ldots$ & $1$ & $1$ & $2$ & $2$ \\
        $\oneplus \oneil \left( \ildtwok \right) \ldots$ & $2$ & $2$ & $1$ & $1$ \\
        \bottomrule
    \end{tabular}        
\end{lemma}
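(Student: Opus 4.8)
The plan is to read the statement as a finite case analysis — one row per possible start of $\lambda$, one column per target — and to cut it down with three reductions before computing anything. First, the number of leading unused points is determined entirely by the leftmost indecomposable block of $\lambda$ together with the first few entries of $\pi$: by Lemma~\ref{lemma_form_of_permutations_in_increasing_oscillations} every block of $\lambda$ is an increasing oscillation, nothing to the right of the first block can affect the start, and by Lemma~\ref{lemma_order_greater_than_three_ignore_o_opo} we may discard the degenerate cases $1$ and $\oneplusone$; this is exactly why the count is stated as a function of the \emph{start} of $\lambda$ only and is independent of $r$. Second, $W_{2n}$ and $W_{2n-1}$ have the same leading entries (they differ only near their right ends), and likewise $M_{2n}$ and $M_{2n-1}$, which is why the two $W$-columns coincide and the two $M$-columns coincide; so it suffices to treat one descent-started target and one ascent-started target. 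Third, because $W_n=M_n^{-1}$, transposing the plot (taking inverses) interchanges the descent-started and ascent-started targets while interchanging ``leftmost'' with ``lowest''; I would carry out the explicit computation for the descent-started block shapes and read off the ascent-started rows from this symmetry.

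For the explicit part I would record the one quantity that controls everything: how many entries below its first entry a block demands immediately. From the closed forms the shape $\dtwo$ has a single entry below its first; the descent-started shape $\ildtwokp=\nils{k+1}{\dtwo}$ has first three entries in the pattern $213$, hence two entries below its first; and the ascent-started shape $\oneil(\ildtwok)$ starts at a valley, hence again only one. On the target side, the first entry of an $M$-type oscillation has exactly one smaller entry after it, while the first entry of a $W$-type oscillation has exactly two. Matching these, a block needing only one point below latches onto the first point of an $M$-target with no leading waste, whereas $\ildtwokp$, needing two, must pass over that first point, costing one leading unused point. This settles the $\dtwo$ and $\ildtwokp$ rows across all four columns, and the $\oneil(\ildtwok)$ row then follows from the transpose symmetry above; together they reproduce the ``$0$/$1$'' entries of the upper half of the table, with achievability witnessed by the explicit embeddings of the type drawn in Figure~\ref{figure_unused_points_at_start_w2n}.

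The three $\oneplus$ rows I would obtain from the first three by a single increment argument: prepending $\oneplus$ adjoins a point lying below and to the left of the rest of $\lambda$, and any embedding must spend exactly one further leading point of $\pi$ (the first available low point) to accommodate it, after which the remaining block latches precisely as in the plain case. Hence each $\oneplus$ entry is one more than the corresponding plain entry, which is the pattern in the lower half of the table. Observation~\ref{observation_only_r_and_rplusone_count} guarantees these are the only families worth tracking, so no further shapes need checking.

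The step I expect to be the genuine obstacle is the matching lower bound: proving that no embedding of the leftmost block can latch with fewer leading unused points than claimed. This is where the rigidity of the oscillating sequence must be used carefully — one must show that the demand for two entries below the first entry really cannot be met by the single low entry at the start of an $M$-target (and, dually, its transpose for the $W$-targets), and that the adjoined $\oneplus$ point cannot be absorbed for free. Keeping the inverse-symmetry bookkeeping honest, so that ``leftmost'' and ``lowest'' are traded consistently when the $\oneil(\ildtwok)$ rows are deduced, and separating off the smallest shapes ($\dtwo$ itself and the case $k=1$) where the leading segments are short, are the places where the argument is most likely to need extra attention.
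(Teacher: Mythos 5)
Your proposal is correct, and at its core it does what the paper does: the paper's entire proof is ``the values \ldots{} are found by considering each of the possibilities'', i.e.\ a finite inspection of how each possible start of $\lambda$ latches onto the first few points of each type of oscillation, illustrated by Figure~\ref{figure_unused_points_at_start_w2n}. Where you genuinely depart from the paper is in compressing that inspection. The paper checks (or rather asserts) all twenty-four entries directly; you check only the descent-started rows against one $W$-type and one $M$-type start via the supply/demand count (the block's first entry needs one or two smaller entries after it, while the first entry of an $M$-type target supplies one and of a $W$-type target two), then recover the $\oneil \left( \ildtwok \right)$ rows from the symmetry $W_n = M_n^{-1}$ and the three $\oneplus$ rows from a uniform ``$+1$'' increment. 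This buys a much smaller verification burden and makes explicit the lower-bound content that the paper dismisses as clear; the price is exactly the bookkeeping you flag, since under inversion ``unused points at the start'' (leftmost) becomes ``unused lowest points'', and one must argue that these coincide for increasing oscillations. That subtlety is real: for instance $\oneil\left(\nils{2}{\dtwo}\right) = 24153$ embeds in $W_6 = 315264$ \emph{using} the first point of $W_6$ yet still wastes the point at position $2$ with value $1$, so the wasted point is ``at the start'' in value rather than in position; your transpose trick converts this awkward case into the clean statement that a descent-started block cannot sit on the first point of an $M$-type target, which is the version that is easy to prove. Since your representative computations are all correct and the reductions (locality of the count to the leftmost block, coincidence of the $W_{2n}$/$W_{2n-1}$ and $M_{2n}$/$M_{2n-1}$ columns, inversion, and the increment argument) are all valid, I see no gap --- only residual verification left in the same places the paper itself leaves implicit.
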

\begin{proof}
    It is clear that if we
    minimise the number of points at the start of
    an embedding, then the number of unused points
    depends on $\pi$, 
    and the start of $\alpha$.  
    The values in Lemma~\ref{lemma_unused_points_at_start}
    are found by considering each of the possibilities.
    We illustrate some of these cases in Figure~\ref{figure_unused_points_at_start_w2n}.
    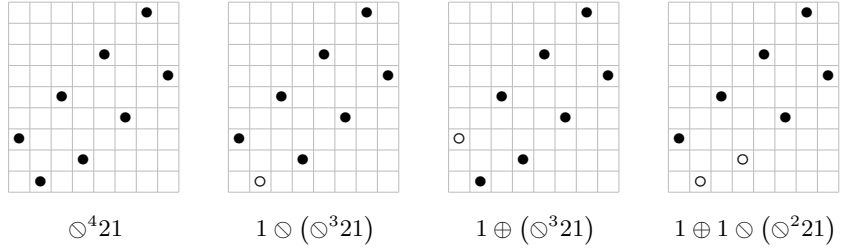
\begin{figure}      
        \centering
        \begin{subfigure}[c]{0.2\textwidth}
            \begin{tikzpicture}[scale=0.28]
            \plotgrid{8}{9}
            \plotperm{3, 1, 5, 2, 7, 4, 9, 6}
            \end{tikzpicture}
            \caption*{$\nils{4}{\dtwo}$}
        \end{subfigure}
        \quad
        \begin{subfigure}[c]{0.2\textwidth}
            \begin{tikzpicture}[scale=0.28]
            \plotgrid{8}{9}
            \plotperm{3, 1, 5, 2, 7, 4, 9, 6}
            \opendot{(2,1)}
            \end{tikzpicture}
            \caption*{$\oneil \left( \nils{3}{\dtwo} \right)$}
        \end{subfigure}
        \quad
        \begin{subfigure}[c]{0.2\textwidth}
            \begin{tikzpicture}[scale=0.28]
            \plotgrid{8}{9}
            \plotperm{3, 1, 5, 2, 7, 4, 9, 6}
            \opendot{(1,3)}
            \end{tikzpicture}
            \caption*{$\oneplus \left( \nils{3}{\dtwo} \right)$}
        \end{subfigure}
        \quad
        \begin{subfigure}[c]{0.2\textwidth}
            \begin{tikzpicture}[scale=0.28]
            \plotgrid{8}{9}
            \plotperm{3, 1, 5, 2, 7, 4, 9, 6}
            \opendot{(2,1)}
            \opendot{(4,2)}
            \end{tikzpicture}
            \caption*{$\oneplus \oneil \left( \nils{2}{\dtwo} \right)$}
        \end{subfigure}    
        \caption{Embedding the start of $\alpha$ in $W_{2n}$.}
        \label{figure_unused_points_at_start_w2n}
    \end{figure} 
\end{proof}
%
\begin{lemma}
    \label{lemma_unused_points_at_end}
    If $\pi$ is an increasing oscillation,
    and $\alpha \leq \pi$ is sum indecomposable, 
    then in any embedding of an 
    element $\lambda$ of $\familysum{\sumra}$ into $\pi$,
    the minimum number of unused points at the end of $\pi$
    depends on the end of $\lambda$, and on $\pi$,
    and is as shown below:    
    
    \centering
    \begin{tabular}{lcccc}
        \toprule
        End of $\lambda$                     & 
        $\pi=W_{2n}$ & $\pi=W_{2n-1}$ & $\pi=M_{2n}$ & $\pi=M_{2n-1}$ \\
        \midrule
        $\ldots \dtwo $ & $0$ & $0$ & $0$ & $0$ \\
        $\ldots \ildtwokp $ & $0$ & $1$ & $1$ & $0$ \\
        $\ldots \ildtwokb \ilone $ & $1$ & $0$ & $0$ & $1$ \\
        \midrule
        $\ldots \dtwo \plusone$ & $1$ & $1$ & $1$ & $1$ \\
        $\ldots \ildtwokpb \plusone $ & $1$ & $2$ & $2$ & $1$ \\
        $\ldots \ildtwokb \ilone \plusone$ & $2$ & $1$ & $1$ & $2$ \\
        \bottomrule
    \end{tabular}
\end{lemma}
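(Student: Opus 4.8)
The plan is to obtain this table as the image of the table in Lemma~\ref{lemma_unused_points_at_start} under the reverse-complement symmetry $\kappa$ (rotation of the plot by $180^\circ$). This $\kappa$ is an involutive order-automorphism of the containment poset that preserves sum-indecomposability and reverses the sum decomposition, $\kappa(\beta \oplus \gamma) = \kappa(\gamma) \oplus \kappa(\beta)$, with $\kappa(1) = 1$. Hence it carries $\familysum{\sumra}$ bijectively onto $\familysum{\nsums{r}{\kappa(\alpha)}}$, turning a trailing $\plusone$ into a leading $\oneplus$ and, crucially, turning the \emph{end} of $\lambda$ into the \emph{start} of $\kappa(\lambda)$. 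Since all the hypotheses ($\alpha$ sum indecomposable with $\alpha > 1$, $\pi$ an increasing oscillation, $r \geq 1$) are $\kappa$-invariant, Lemma~\ref{lemma_unused_points_at_start} applies verbatim to $\kappa(\lambda)$, $\kappa(\alpha)$ and $\kappa(\pi)$.

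First I would observe that $\kappa$ reverses positions, so for each embedding of $\lambda$ into $\pi$ the number of unused points at the end of $\pi$ equals the number of unused points at the start of $\kappa(\pi)$ for the corresponding embedding of $\kappa(\lambda)$; as $\kappa$ is a bijection on embeddings, the two minima coincide. It then remains to compute how $\kappa$ permutes the four oscillation types and the three tail shapes. A short direct calculation shows that $\kappa$ fixes $\dtwo$, each $\nils{k}{\dtwo}$, and hence each $W_{2n}$ and $M_{2n}$, while interchanging the odd oscillations $W_{2n-1} \leftrightarrow M_{2n-1}$ and the one-sided interleaves $\ildtwokb \ilone \leftrightarrow \oneil \ildtwokb$. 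In terms of the table this means the even columns are fixed and the odd columns $W_{2n-1}$ and $M_{2n-1}$ are exchanged, while the end rows map to start rows by $\ldots \dtwo \mapsto \dtwo \ldots$, by $\ldots \ildtwokp \mapsto \ildtwokp \ldots$, and by $\ldots \ildtwokb \ilone \mapsto \oneil \ildtwokb \ldots$, with the three $\plusone$ rows mapping to the three $\oneplus$ rows.

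Putting these pieces together, each entry of the stated table equals the entry of Lemma~\ref{lemma_unused_points_at_start} in the image row and in the column $\kappa(\pi)$; I would finish by checking that all six rows match under this relabelling. For instance, the row $\ldots \ildtwokp$ gives $0,1,1,0$ because the start row $\ildtwokp \ldots$ reads $0,0,1,1$ across $W_{2n}, W_{2n-1}, M_{2n}, M_{2n-1}$ and the odd columns are swapped. The main obstacle is purely the bookkeeping of this relabelling: one must pin down exactly which oscillations $\kappa$ fixes and which it swaps, since a single slip---say, treating an even oscillation as swapped, or confusing which one-sided interleave ends in a descent---would permute the columns wrongly and corrupt the nonzero entries. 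As a safeguard, and mirroring the proof of Lemma~\ref{lemma_unused_points_at_start}, I would independently confirm one or two of the nonzero rows by drawing the minimal embeddings of the relevant tail shapes into $W_{2n}$ and $M_{2n}$ and reading off the trailing unused points directly.
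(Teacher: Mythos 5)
Your proof is correct, but it takes a genuinely different route from the paper. The paper proves this lemma exactly as it proves Lemma~\ref{lemma_unused_points_at_start}: a direct examination of all the possibilities, embedding each tail shape into each of $W_{2n}$, $W_{2n-1}$, $M_{2n}$, $M_{2n-1}$ and reading off the minimal number of trailing unused points. You instead derive the end table from the start table via the reverse-complement symmetry $\kappa$, and your bookkeeping is right: $\kappa$ fixes $\dtwo$, $\nils{k}{\dtwo}$, $W_{2n}$ and $M_{2n}$, swaps $W_{2n-1} \leftrightarrow M_{2n-1}$ and $\ildtwokb \ilone \leftrightarrow \oneil \ildtwokb$, reverses sum order (so a trailing $\plusone$ becomes a leading $\oneplus$), and sends trailing unused points of an embedding into $\pi$ to leading unused points of the corresponding embedding into $\kappa(\pi)$. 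I checked all entries of the stated table against Lemma~\ref{lemma_unused_points_at_start} under this relabelling and they agree; for example, the row $\ldots \ildtwokb \ilone \plusone$ correctly inherits $2,1,1,2$ from the start row $\oneplus \oneil \ildtwok \ldots$, which reads $2,2,1,1$, after swapping the odd columns. What the two approaches buy: the paper's direct examination is self-contained and concrete but duplicates the work of the start lemma, with an independent chance of error in each cell; your symmetry argument eliminates that duplication and makes transparent why the two tables are rearrangements of one another, at the cost of concentrating all the risk in the column/row relabelling --- a risk you correctly identify and sensibly hedge by spot-checking a couple of rows directly, which is in effect a partial fallback to the paper's method.
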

\begin{proof}
    We examine all the possibilities as we did in
    Lemma~\ref{lemma_unused_points_at_start}.
\end{proof}

We now consider how closely copies of 
some sum indecomposable $\alpha$ can be 
embedded into $\pi$.
This leads to two inequalities
that relate $\alpha$, $\pi$
and the maximum number of copies of $\alpha$
that can be embedded in $\pi$.
Where $\alpha \neq \dtwo$, the shape
of $\alpha$ fixes the way the two copies
can be embedded in an increasing oscillation.
If $\alpha = \dtwo$, then we will see that
there are choices for the embedding.
\begin{lemma}
    \label{lemma_unused_points_interior}
    If $\pi$ is an increasing oscillation,
    and $\alpha \neq \dtwo$,
    and $\alpha \leq \pi$ is sum indecomposable, 
    then in any embedding of $\sumra$ into $\pi$,
    the minimum number of points 
    between the start and end of $\sumra$
    depends on  $\alpha$,
    and is as shown below:
    
    \centering
    \begin{tabular}{lccc}
        \toprule
        Shape of $\alpha$ & 
        Points in $\sumra$ &
        Unused points &
        Minimum points
        \\ 
        \midrule
        $\ildtwokp$ & $2kr $ & $2r-2$ & $2kr+2r-2$ \\
        $\oneil \ildtwokb$ & $2kr+r $ & $r-1 $ & $2kr+2r-1$ \\
        $\ildtwokb \ilone$ & $2kr+r $ & $r-1 $ & $2kr+2r-1$ \\
        $\oneil \ildtwokb \ilone$ & $2kr+2r $ & $2r-2$ & $2kr+4r-2$ \\
        \bottomrule
    \end{tabular}    
\end{lemma}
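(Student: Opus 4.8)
The plan is to exploit the rigidity of oscillation embeddings to reduce the statement to counting the points that are unavoidably skipped between consecutive copies of $\alpha$, and then to verify the four rows of the table by a case analysis on the shape of $\alpha$. By Lemma~\ref{lemma_form_of_permutations_in_increasing_oscillations} the only indecomposable $\alpha$ we need to treat are sub-oscillations, that is, members of $\familyil{\ildtwok}$, so ``the four shapes'' in the table are exhaustive. The key structural input, and the reason $\alpha=\dtwo$ is excluded, is that for $\alpha\neq\dtwo$ the way a single copy of $\alpha$ sits inside $\pi$ is essentially forced: in the interior of $\pi$ each copy occupies a fixed set of positions relative to the peak/valley phase of $\pi$, so the embedding of $\sumra$ is determined up to independently translating the blocks, and minimising the span of $\sumra$ becomes the purely local problem of minimising the offset at each junction between two consecutive copies.

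First I would fix attention on one interior junction. Since the copies are stacked in a direct sum, every point of the upper copy must lie strictly above and to the right of the lower copy; in an increasing oscillation the valleys that immediately follow the top of the lower copy sit below it in value, so they cannot be reused and are forced to be skipped. Reading the exact number of forced skips off the peak/valley value formula of the oscillating sequence, one finds it is controlled by how $\alpha$ terminates on the right (a bare $\dtwo$-end versus an $\ilone$-decorated end) and how it begins on the left (bare versus $\oneil$-decorated). Checking the four combinations gives the ``Unused points'' column: the waste per junction is $2$ when the two ends agree, i.e.\ both bare ($\ildtwok$) or both decorated ($\oneil\ildtwokb\ilone$), and is $1$ when exactly one end is decorated ($\oneil\ildtwokb$ or $\ildtwokb\ilone$). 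The slightly counter-intuitive feature to nail down is that decorating both ends does not cancel the two single-end savings but instead interferes so that the waste returns to $2$. Adding the $r\,\order{\alpha}$ points actually occupied by $\sumra$ to the total waste then yields the ``Minimum points'' column.

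The genuinely load-bearing steps, and where I expect the work to concentrate, are the two halves of the per-junction count. The upper bound is an explicit periodic construction: translate copies so that the forced offset above is realised at every junction, which is the interior analogue of the embeddings drawn for Lemmas~\ref{lemma_unused_points_at_start} and~\ref{lemma_unused_points_at_end}. The hard part is the matching lower bound, namely that one cannot do better than the stated offset: this requires showing, from the explicit coordinates of the oscillation, that the top of each copy really does block exactly that many of the following points of $\pi$, and that no advantage can be gained by spreading a copy out rather than keeping its forced interior footprint. I would also have to be careful that this interior count is genuinely independent of which of $W_{2n},W_{2n-1},M_{2n},M_{2n-1}$ the oscillation $\pi$ is, the point being that any $\pi$-dependence is a one-off boundary effect already accounted for by Lemmas~\ref{lemma_unused_points_at_start} and~\ref{lemma_unused_points_at_end}, leaving the repetitive interior cost depending only on $\alpha$.
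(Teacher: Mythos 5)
Your proposal is correct and follows essentially the same route as the paper: the paper's proof also rests on the rigidity observation (for $\alpha \neq \dtwo$ the interleave fixes the layout of each copy, so one simply packs consecutive copies as closely as possible) and then a case analysis of the per-junction waste according to whether the ends of $\alpha$ are bare or $\oneil$/$\ilone$-decorated, which the paper carries out by inspecting the four possibilities and illustrating them in Figure~\ref{figure_unused_points_interior_not_21}. Your per-junction counts (waste $2$ for bare--bare and decorated--decorated, waste $1$ for mixed) match the paper's table exactly; you merely spell out the upper-bound construction and forced-skip lower bound that the paper compresses into ``it is simple to examine the four possibilities.''
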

\begin{proof}
    If $r$ = 1, then there are no unused points,
    and so the minimum number of points depends solely
    on the points in $\alpha$, and the table 
    reflects this.
    
    Assume now that $r > 1$.
    If $\alpha \neq \dtwo$, then we can see that the 
    interleave fixes the layout of each copy of $\alpha$,
    so we simply pack each copy as close as possible.
    This packing clearly depends on the 
    start and end of $\alpha$, and it
    is simple to examine the four possibilities.
    Examples are shown in Figure~\ref{figure_unused_points_interior_not_21}.
    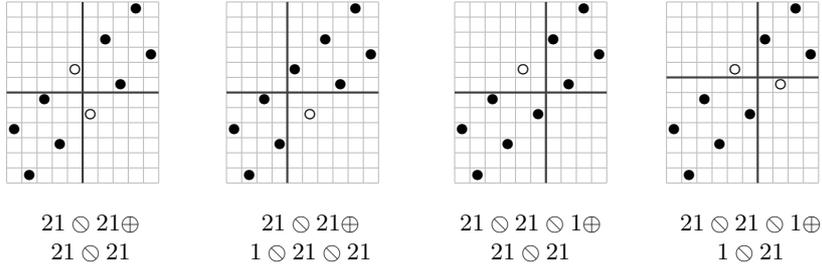
\begin{figure}
        \centering
        \captionsetup[subfigure]{justification=centering}
        \begin{subfigure}[c]{0.2\textwidth}        
            \begin{tikzpicture}[scale=0.2]
            \plotgrid{10}{12}
            \plotperm{4,1,6,3,8,5,10,7,12,9}
            \opendot{(5,8)}
            \opendot{(6,5)}
            \darkhline{6}{10}
            \darkvline{5}{12}
            \end{tikzpicture}
            \caption*{$\dtwo \interleave \dtwo \oplus$\\$\dtwo \interleave \dtwo$}
        \end{subfigure}
        \quad
        \begin{subfigure}[c]{0.2\textwidth}
            \begin{tikzpicture}[scale=0.2]
            \plotgrid{10}{12}
            \plotperm{4,1,6,3,8,5,10,7,12,9}
            \opendot{(6,5)}
            \darkhline{6}{10}
            \darkvline{4}{12}
            \end{tikzpicture}
            \caption*{$\dtwo \interleave \dtwo \oplus$\\$\oneil \dtwo \interleave \dtwo$}
        \end{subfigure}
        \quad
        \begin{subfigure}[c]{0.2\textwidth}
            \centering
            \begin{tikzpicture}[scale=0.2]
            \plotgrid{10}{12}
            \plotperm{4,1,6,3,8,5,10,7,12,9}
            \opendot{(5,8)}
            \darkhline{6}{10}
            \darkvline{6}{12}
            \end{tikzpicture}
            \caption*{$\dtwo \interleave \dtwo \ilone \oplus$\\$\dtwo \interleave \dtwo$}
        \end{subfigure}
        \quad
        \begin{subfigure}[c]{0.2\textwidth}
            \begin{tikzpicture}[scale=0.2]
            \plotgrid{10}{12}
            \plotperm{4,1,6,3,8,5,10,7,12,9}
            \opendot{(5,8)}
            \opendot{(8,7)}
            \darkhline{7}{10}
            \darkvline{6}{12}
            \end{tikzpicture}
            \caption*{$\dtwo \interleave \dtwo \ilone \oplus$\\$\oneil \dtwo$}
        \end{subfigure}
        \caption{Packing $\alpha$ as close as possible when $\alpha \neq \dtwo$.}
        \label{figure_unused_points_interior_not_21}
    \end{figure} 
\end{proof}

We now turn to the case where $\alpha = \dtwo$.  
This is more complex than the previous cases.
We can see that
there must be at least one point between each copy of $\alpha$.
We can insert each copy of $\dtwo$
in two ways,
one where the points are horizontally adjacent,
and one where the points are vertically adjacent.
These alternatives can be seen in 
Figure~\ref{figure_unused_points_interior_21}.
Alternating these means that there will be exactly
one point between each copy of $\alpha$,
so this embedding minimises the number of 
points between the start and end of $\sumra$.
The complication in this case relates to how we
start and end the embedding.  We illustrate this by
showing, in Figure~\ref{figure_unused_points_interior_21},
maximal embeddings where
we are embedding into $W_{8}$, $W_{10}$ and $W_{12}$.
\begin{figure}       
    \begin{center}
        \begin{subfigure}[c]{0.3\textwidth}
            \begin{center}
            \begin{tikzpicture}[scale=0.28]
            \plotgrid{8}{8}
            \plotperm{3, 1, 5, 2, 7, 4, 8, 6}
            \opendot{(4,2)}
            \opendot{(5,7)}
            \darkhline{3}{8}
            \darkhline{5}{8}
            \darkvline{2}{8}
            \darkvline{6}{8}
            \end{tikzpicture}
            \caption*{$W_{8}$}
            \end{center}
        \end{subfigure}
        \quad
        \begin{subfigure}[c]{0.3\textwidth}
            \begin{center}
            \begin{tikzpicture}[scale=0.28]
            \plotgrid{10}{10}
            \plotperm{3, 1, 5, 2, 7, 4, 9, 6, 10, 8}
            \opendot{(4,2)}
            \opendot{(5,7)}
            \opendot{(9,10)}
            \opendot{(10,8)}
            \darkhline{3}{10}
            \darkhline{5}{10}
            \darkvline{2}{10}
            \darkvline{6}{10}
            \end{tikzpicture}
            \caption*{$W_{10}$}
            \end{center}
        \end{subfigure}
        \quad
        \begin{subfigure}[c]{0.3\textwidth}
            \begin{center}
            \begin{tikzpicture}[scale=0.28]
            \plotgrid{12}{12}
            \plotperm{3, 1, 5, 2, 7, 4, 9, 6, 11, 8, 12, 10}
            \opendot{(4,2)}
            \opendot{(5,7)}
            \opendot{(10,8)}
            \opendot{(11,12)}
            \darkhline{3}{12}
            \darkhline{5}{12}
            \darkhline{9}{12}
            \darkvline{2}{12}
            \darkvline{6}{12}
            \darkvline{8}{12}
            \end{tikzpicture}
            \caption*{$W_{12}$}
            \end{center}
        \end{subfigure}
        \caption{Examples of unused points when embedding $\nsums{r}{\dtwo}$.}
        \label{figure_unused_points_interior_21}
    \end{center}
\end{figure}      
A detailed examination of each possible case
gives us our second inequality.
\begin{lemma}
    \label{lemma_unused_points_21}
    If $\pi$ is an increasing oscillation,
    and $\alpha = \dtwo$
    then for $\sumra$ to be contained
    in $\pi$
    we must have 
    $3r - 1 \leq 2n$ for $\pi \in \{W_{2n}, M_{2n} \}$,
    and
    $3r \leq 2n$ for $\pi \in \{W_{2n-1}, M_{2n-1} \}$.
\end{lemma}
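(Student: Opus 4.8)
The plan is to show that any embedding of $\nsums{r}{\dtwo}$ into an increasing oscillation occupies at least $3r-1$ consecutive positions. Since those positions all lie among the $\order{\pi}$ positions of $\pi$, this forces $3r-1\leq\order{\pi}$, which is exactly $3r-1\leq 2n$ when $\order{\pi}=2n$ and $3r\leq 2n$ when $\order{\pi}=2n-1$. Because $\nsums{r}{\dtwo}$ is its own inverse and $W_n=M_n^{-1}$, containment in $M_n$ is equivalent to containment in $W_n$, so it suffices to treat $\pi=W_{2n}$ and $\pi=W_{2n-1}$ and obtain the $M$-cases for free.

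First I would fix coordinates on $W_n$ using the increasing oscillating sequence, so that a point in an odd position sits three above its index and a point in an even position sits one below (which is the relative order inside $W_n$). A direct check then shows that every $\dtwo$-pattern in the interior of $W_n$ consists of a high (odd-position) point together with a low (even-position) point lying either one or three positions to its right --- the ``horizontally adjacent'' and ``vertically adjacent'' placements --- with the single exception of the final point of $W_{2n-1}$, which contributes one extra inversion at position-distance two. Writing the $r$ copies of $\dtwo$ in increasing order, with top positions $H_1<\dots<H_r$ and bottom positions $B_i$, and setting $\epsilon_i=B_i-H_i\in\{1,3\}$, the stacking of the copies yields two inequalities per consecutive pair: the position constraint $d_i:=H_{i+1}-H_i\geq\epsilon_i+1$ and the value constraint $d_i\geq 5-\epsilon_{i+1}$.

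The heart of the argument is to average these two bounds, giving $d_i\geq 3+\tfrac12(\epsilon_i-\epsilon_{i+1})$, so that the telescoping sum produces
\[
\sum_{i=1}^{r-1} d_i \;\geq\; 3(r-1)+\tfrac12(\epsilon_1-\epsilon_r),
\]
and hence the span $B_r-H_1+1=\sum_{i=1}^{r-1}d_i+\epsilon_r+1\geq 3r-2+\tfrac12(\epsilon_1+\epsilon_r)\geq 3r-1$, using $\epsilon_1,\epsilon_r\geq 1$. This settles $W_{2n}$ outright, since every copy is of interior type. For $W_{2n-1}$ I would split into two cases: if no copy uses the exceptional end-inversion the same computation applies verbatim, whereas if the final copy does use it (so $H_r=2n-3$, $B_r=2n-1$) the value constraint pushes the penultimate copy strictly lower, and the same averaging applied to the first $r-1$ copies even gives span $\geq 3r$. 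Either way the span is at least $3r-1$, and the stated inequality follows.

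The step I expect to be the main obstacle is precisely the one the preceding discussion flags: for $\alpha=\dtwo$ each copy admits the two distinct placements $\epsilon_i\in\{1,3\}$, and these interact through neighbouring copies --- a tightly packed (horizontal) copy forces a larger gap on its right because the oscillation's values climb too slowly to clear its high top, whereas a loose (vertical) copy relaxes that gap. Capturing this trade-off cleanly is exactly what the averaging inequality accomplishes. The remaining delicate point is the behaviour at the two ends of the oscillation, and in particular the extra distance-two inversion at the tail of $W_{2n-1}$, which must be checked separately to confirm it cannot be exploited to beat the $3r-1$ bound; the case analysis above shows it only ever makes the span larger.
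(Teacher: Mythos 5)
Your proof is correct and takes a genuinely different, and more rigorous, route than the paper's. The paper's proof is a greedy packing argument: it embeds the first $\dtwo$ into the first two elements of $\pi$, packs each successive copy as close as possible to the preceding one (alternating the horizontally- and vertically-adjacent placements), and asserts without further justification that at least one element of $\pi$ must be skipped between consecutive copies, so that $r$ copies need at least $2r+(r-1)=3r-1$ elements. You instead prove the bound for \emph{every} embedding: after classifying the inversions of $W_n$ (all of the form (odd position, even position) at distance $1$ or $3$, plus the single distance-$2$ inversion at the tail of $W_{2n-1}$), you extract from consecutive copies the position constraint $d_i\geq\epsilon_i+1$ and the value constraint $d_i\geq 5-\epsilon_{i+1}$, and your averaging makes the $\epsilon_i$'s telescope, giving span at least $3r-1$ for any embedding; your reduction of the $M$-cases to the $W$-cases, using that $\nsums{r}{\dtwo}$ is an involution and $W_n=M_n^{-1}$, is also tidier than the paper's implicit case-by-case claim. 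What your approach buys is a genuine optimality proof where the paper only exhibits a packing and asserts its minimality; the cost is the coordinate bookkeeping and the end-of-$W_{2n-1}$ case split. One small correction: your parenthetical claim that the exceptional case forces span at least $3r$ is false --- in $W_7=3152746$, taking values $5,2$ at positions $3,4$ and values $7,6$ at positions $5,7$ embeds $\nsums{2}{\dtwo}$ with span $5=3r-1$ --- but this is harmless, because your top-level logic needs only span $\geq 3r-1$, which your case analysis does deliver: the first $r-1$ copies span at least $3(r-1)-1$ positions, and since $H_r>B_{r-1}$ and $B_r=H_r+2$, the exceptional final copy adds at least three more.
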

\begin{proof}
    In every case we start by embedding the first $\dtwo$
    into the first two elements of the permutation.
    Thereafter, we embed each successive $\dtwo$
    as close as possible to the preceding $\dtwo$.
    The minimum number of elements to embed $r$ copies
    of $\dtwo$ will be $2r$ elements to hold the 
    points of the $\dtwo$s, and $r-1$ intermediate empty elements.
    For $W_{2n}$ and $M_{2n}$, this then gives
    $3r-1 \leq 2n$, and 
    for $W_{2n-1}$ and $M_{2n-1}$ we obtain
    $3r-1 \leq 2n-1$.
\end{proof}

We now have a complete understanding of the number of points
required to embed any 
permutation that contributes to the \mob sum
into an increasing oscillation.
The following Lemma summarises the situation.

\begin{lemma}
    \label{lemma_inequalities_for_pi}
    If $\pi$ is an increasing oscillation,
    and $\alpha \in \familyil{\ildtwok} \leq \pi$
    (so $\alpha$ is sum indecomposable),
    then for $\sumra$ to be contained
    in $\pi$,
    the inequality in the table below must be satisfied,
    where $k \geq 1$.
    
    \centering
    \begin{tabular}{l@{\phantom{xxxxxxx}}c@{\phantom{xxxxxxx}}r}
        \toprule
        $\pi$ & Shape of $\alpha$ & Inequality \\ 
        \midrule
        $W_{2n},   M_{2n}$    & $\dtwo$ &  $3r -1 \leq 2n $ \\
        $W_{2n-1}, M_{2n-1}$  & $\dtwo$ &  $3r    \leq 2n $\\
        $W_{2n}$ & $ \ildtwokp $ & $ 2kr+2r-2 \leq 2n $ \\
        $W_{2n-1}$ & $ \oneil \ildtwokb $ & $ 2kr+2r+2 \leq 2n $ \\
        $M_{2n-1}$ & $ \ildtwokb \ilone $ & $ 2kr+2r+2 \leq 2n $ \\
        $M_{2n}$ & $ \oneil \ildtwokb \ilone $ & $ 2kr+4r-2 \leq 2n $ \\ 
        $W_{2n}, W_{2n-1}, M_{2n-1}$ & $ \oneil \ildtwokb \ilone $ & $ 2kr+4r \leq 2n $ \\
        \multicolumn{2}{c}{All other cases} & $ 2kr+2r \leq 2n $ \\
        \bottomrule
    \end{tabular}
\end{lemma}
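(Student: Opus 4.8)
The plan is to obtain each inequality as a \emph{necessary} condition by summing the three preceding lemmas. Fix any embedding of $\sumra$ into $\pi$. The points of $\pi$ separate into those realising the body of $\sumra$ — its own points together with the gaps forced between consecutive copies — and those skipped in order to align the first and last copies of $\alpha$ against the two ends of $\pi$. These groups are disjoint, so
\[
\order{\pi}\ \ge\ (\text{unused at start})+(\text{span of }\sumra)+(\text{unused at end}),
\]
where the three summands are bounded below by the relevant entry of Lemma~\ref{lemma_unused_points_at_start}, the ``Minimum points'' column of Lemma~\ref{lemma_unused_points_interior}, and the relevant entry of Lemma~\ref{lemma_unused_points_at_end}. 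Since $\order{\pi}=2n$ for $\pi\in\{W_{2n},M_{2n}\}$ and $\order{\pi}=2n-1$ for $\pi\in\{W_{2n-1},M_{2n-1}\}$, this is exactly the claimed inequality after rearrangement. The case $\alpha=\dtwo$ is the one shape not covered by Lemma~\ref{lemma_unused_points_interior}; there I would simply invoke Lemma~\ref{lemma_unused_points_21}, which supplies the first two rows of the table verbatim.

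For the remaining four shapes the start- and end-patterns are read directly off the interleave structure and each matches a correspondingly-named row of Lemmas~\ref{lemma_unused_points_at_start} and~\ref{lemma_unused_points_at_end}: $\ildtwokp$ starts like $\ildtwokp\ldots$ and ends like $\ldots\ildtwokp$; $\oneil\ildtwokb$ starts like $\oneil\ildtwokb\ldots$ and ends like $\ldots\ildtwokp$; $\ildtwokb\ilone$ starts like $\ildtwokp\ldots$ and ends like $\ldots\ildtwokb\ilone$; and $\oneil\ildtwokb\ilone$ starts like $\oneil\ildtwokb\ldots$ and ends like $\ldots\ildtwokb\ilone$. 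Adding the start and end costs to the span from Lemma~\ref{lemma_unused_points_interior} produces, for each of the sixteen (shape, $\pi$-type) pairs, a total of the form $2kr+2r+\delta$ (or $2kr+4r+\delta$ for $\oneil\ildtwokb\ilone$) with a small constant $\delta$; setting this $\le\order{\pi}$ and absorbing the ``$-1$'' present for the odd-length oscillations gives the tabulated bound. The generic outcome is $2kr+2r\le 2n$ (respectively $2kr+4r\le 2n$), and the rows singled out in the statement are precisely the pairs in which the combined start- and end-alignment cost dips below its generic value, sharpening the bound to $2kr+2r-2$, $2kr+2r+2$, or $2kr+4r-2$.

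The hard part is the additivity asserted in the displayed inequality. It is tempting but wrong to picture the start-unused points as lying strictly to the left of the body: as the $W_{2n}$ examples in Figure~\ref{figure_unused_points_at_start_w2n} show, an alignment skip can sit in the interior of the positional span of $\sumra$. What must be argued is that in \emph{every} embedding the used points, the interior gaps, the start-alignment skips and the end-alignment skips are nonetheless four pairwise-disjoint sets of points of $\pi$, so that their counts genuinely add and each is individually at least its tabulated minimum. This decoupling is a consequence of the rigidity of increasing oscillations: because $\pi$ alternates between high and low values in a fixed pattern, once the phase at which $\alpha$ meets $\pi$ is chosen, a saving at one end cannot be traded against a cost at the other, so the three minimisations are independent. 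I would make this precise by checking that each of Lemmas~\ref{lemma_unused_points_at_start}, \ref{lemma_unused_points_interior} and~\ref{lemma_unused_points_at_end} already minimises over all placements of its own region; the remaining work — the sixteen shape/type combinations plus the two $\dtwo$ rows — is then routine arithmetic on the three tables.
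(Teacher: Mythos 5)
Your proposal is correct and takes essentially the same approach as the paper: the paper's proof is a single sentence that applies Lemmas~\ref{lemma_unused_points_at_start}, \ref{lemma_unused_points_at_end}, \ref{lemma_unused_points_interior} and~\ref{lemma_unused_points_21} case by case to the possibilities for $\pi$ and $\alpha$, which is exactly your start-cost $+$ span $+$ end-cost accounting (and your row-by-row arithmetic does reproduce the table). Your additional discussion of why the three minima add disjointly is more care than the paper itself records.
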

\begin{proof}
    We apply Lemmas~\ref{lemma_unused_points_at_start},
    \ref{lemma_unused_points_at_end}, 
    ~\ref{lemma_unused_points_interior} 
    and~\ref{lemma_unused_points_21} to the
    possibilities for $\pi$ and $\alpha$.
\end{proof}

As a consequence of 
Lemmas~\ref{lemma_unused_points_at_start}
and~\ref{lemma_unused_points_at_end}
we can define a relationship
between the minimum number of points
required to embed some $\sumra$,
and the minimum number of points required
to embed 
$\oneplus \sumrab$,
$\sumra \plusone$
and
$\oneplus \sumrab \plusone$.
\begin{corollary}
    \label{corollary_add_2_for_op_or_po}
    If $\pi$ is an increasing oscillation,
    and $\alpha \leq \pi$ is sum indecomposable
    and if
    the minimum number of points required to embed $\sumra$ into $\pi$
    is $C$, then
    the minimum number of points required to embed 
    $\oneplus \sumrab$ into $\pi$ is $C+2$,
    the minimum number of points required to embed 
    $\sumra \plusone$ into $\pi$ is $C+2$,
    and
    the minimum number of points required to embed 
    $\oneplus \sumrab \plusone$ into $\pi$ is $C+4$.
\end{corollary}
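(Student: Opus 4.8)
The plan is to express the minimum embedding size $C$ as a sum of four mutually independent contributions, and then to track how each responds to a prepended or appended~$1$.

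First I would make explicit the decomposition already underlying the proof of Lemma~\ref{lemma_inequalities_for_pi}: for any $\lambda \in \familysum{\sumra}$, the minimum number of points of $\pi$ needed to contain $\lambda$ is the sum of (i)~the number of points of $\lambda$, (ii)~the gaps forced between consecutive copies of $\alpha$ (given by Lemma~\ref{lemma_unused_points_interior}, or by Lemma~\ref{lemma_unused_points_21} when $\alpha = \dtwo$), (iii)~the unused points forced at the start (Lemma~\ref{lemma_unused_points_at_start}), and (iv)~the unused points forced at the end (Lemma~\ref{lemma_unused_points_at_end}). The structural fact to record here is that these four terms are \emph{independent}: the interior gaps depend only on how one full copy of $\alpha$ abuts the next, hence only on $\alpha$ and $r$; the start term depends only on the start-shape of $\lambda$; and the end term depends only on its end-shape.

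Next I would read off the two boundary tables the single observation that drives the result. Comparing the upper and lower halves of the table in Lemma~\ref{lemma_unused_points_at_start}, the entry for any start-shape carrying a leading $\oneplus$ exceeds the entry for the same shape without the $\oneplus$ by exactly~$1$, uniformly across all four columns $W_{2n}, W_{2n-1}, M_{2n}, M_{2n-1}$ (and likewise for the $\dtwo$ row). The table in Lemma~\ref{lemma_unused_points_at_end} has the identical property for a trailing $\plusone$.

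Finally I would combine these facts. Passing from $\sumra$ to $\oneplus \sumrab$ adds exactly one point to $\lambda$, raises the start term by~$1$ by the table, and leaves both the interior gaps (the $r$ copies of $\alpha$ are untouched) and the end term (the end-shape is unchanged) fixed; hence $C$ increases by exactly~$2$. The symmetric argument through Lemma~\ref{lemma_unused_points_at_end} shows that passing to $\sumra \plusone$ also increases $C$ by~$2$. Applying both modifications at once raises the point count by~$2$, the start term by~$1$, and the end term by~$1$, while the interior is still unchanged, giving a total increase of~$4$ for $\oneplus \sumrab \plusone$. I expect the only delicate step to be the independence claim in the first paragraph --- specifically, checking that a single boundary $1$ neither disturbs the packing of the interior copies nor alters the opposite boundary --- but this is exactly what Lemmas~\ref{lemma_unused_points_at_start}--\ref{lemma_unused_points_21} were set up to guarantee, so the corollary reduces to comparing table rows.
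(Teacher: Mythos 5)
Your proposal is correct and follows essentially the same route as the paper's proof: both rest on comparing the upper and lower halves of the tables in Lemmas~\ref{lemma_unused_points_at_start} and~\ref{lemma_unused_points_at_end}, observing that a prepended (resp.\ appended) $1$ costs exactly one new point plus one additional unused boundary point, while leaving the rest of the embedding untouched. You have merely made explicit the additive decomposition and the independence of the start, interior, and end contributions, which the paper leaves implicit.
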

\begin{proof}
    We can see from Lemmas~\ref{lemma_unused_points_at_start}
    and~\ref{lemma_unused_points_at_end} that 
    adding $\oneplus {}$ 
    at the start of a permutation
    increases the number of points required by two -- 
    one for the new point, and one that is unused.
    Similarly, adding ${} \plusone$
    at the end increases the points required by two.
\end{proof}

Lemma~\ref{lemma_inequalities_for_pi} gives
us inequalities that any $\sumra$ must satisfy
to ensure that $\sumra \leq \pi$.  Further,
Corollary~\ref{corollary_add_2_for_op_or_po}
gives us inequalities that, for a given $\sumra$
allow us to determine if 
$\oneplus \sumrab \leq \pi$, 
$\sumrab \plusone \leq \pi$ and
$\oneplus \sumrab \plusone \leq \pi$.
We can therefore determine what values
of $r$ and $k$ will result in 
$\familysum{\sumra}$ contributing to the \mob function.
We now consider inequalities that
relate $\sigma$ and $\alpha$,
so that we can determine
if $\sigma \leq \alpha$ 
using an inequality.
\begin{lemma}
    \label{lemma_inequalities_for_sigma}
    If $\sigma > 1$ is an increasing oscillation,
    and $\alpha \in \familyil{\ildtwok}$ for some $k$,
    then for $\sigma$ to be contained
    in $\alpha$
    the inequality in the table below must be satisfied,
    where $k \geq 1$.
    
    \centering
    \begin{tabular}{l@{\phantom{xxxxxxx}}c@{\phantom{xxxxxxx}}r}
        \toprule
        $\sigma$ & Shape of $\alpha$ & Inequality \\ 
        \midrule
        $W_{2n-1},M_{2n},M_{2n-1}$ & $ \dtwo $ & False \\        
        $W_{2n-1}$ & $ \ildtwokb \ilone $ & $ k \geq n-1 $ \\
        $M_{2n-1}$ & $ \oneil \ildtwokb $ & $ k \geq n-1 $ \\
        $W_{2n-1}, M_{2n}, M_{2n-1}$ & $ \oneil \ildtwokb \ilone $ & $ k \geq n-1 $ \\         
        $M_{2n}$   & $ \ildtwokp $ & $ k \geq n+1 $ \\
        \multicolumn{2}{c}{All other cases} & $ k \geq n $ \\
        \bottomrule
    \end{tabular}
\end{lemma}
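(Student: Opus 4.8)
The plan is to treat the question ``is $\sigma \leq \alpha$?'' as a single‑copy embedding problem and feed it into the unused‑point counts already established in Lemmas~\ref{lemma_unused_points_at_start} and~\ref{lemma_unused_points_at_end}. Both $\sigma$ and every member of $\familyil{\ildtwok}$ are increasing oscillations, and $\sigma$, being simple, is sum indecomposable. Writing the four family members explicitly, $\ildtwok = W_{2k}$, $\oneil\ildtwok = M_{2k+1}$, $\ildtwok\ilone = W_{2k+1}$ and $\oneil\ildtwok\ilone = M_{2k+2}$, so each $\alpha$ is itself an increasing oscillation of one of the four standard types, with length $2k$, $2k+1$, $2k+1$, $2k+2$ respectively. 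The idea is therefore to apply Lemmas~\ref{lemma_unused_points_at_start} and~\ref{lemma_unused_points_at_end} with $\pi = \alpha$ and with the embedded permutation being $\sigma$ itself; formally this is the $r=1$ case, in which $\sigma = \sumra$ carries no $1 \oplus$ or $\oplus 1$ decoration, so only the first three rows of each of those tables are relevant and, crucially, there is no interior gap to account for (Lemmas~\ref{lemma_unused_points_interior} and~\ref{lemma_unused_points_21} play no role).

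The counting step is then as follows. Fix any embedding of $\sigma$ into $\alpha$. Since $\sigma$ is non‑empty, its first and last used points split the unused points of $\alpha$ into three disjoint blocks: those lying before the first used point, those lying after the last, and those strictly in between. By Lemma~\ref{lemma_unused_points_at_start} the leftmost block has size at least the tabulated start value $s(\sigma,\alpha)$, determined by the start of $\sigma$ and the type of $\alpha$, and by Lemma~\ref{lemma_unused_points_at_end} the rightmost block has size at least the tabulated end value $e(\sigma,\alpha)$. As these two minima are charged to disjoint blocks and the interior block is non‑negative, every embedding satisfies
\[
\order{\alpha} \;\geq\; \order{\sigma} + s(\sigma,\alpha) + e(\sigma,\alpha).
\]
This single inequality is exactly the necessary condition the lemma asserts. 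Note that only necessity is required, so I never have to argue that the two minima are simultaneously attainable—each is a lower bound for the corresponding block in the same fixed embedding, which is all that is used.

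It then remains to read off $s$ and $e$ for each shape and simplify. Using the classification behind Lemma~\ref{lemma_form_of_permutations_in_increasing_oscillations}, I would record the start/end category of each of the five shapes of $\sigma$: $\dtwo$ both starts and ends as $\dtwo$; $W_{2n}$ and $W_{2n-1}$ start like $\ildtwokp$, while $M_{2n}$ and $M_{2n-1}$ start like $\oneil\ildtwokb$; and the end categories are the descent type $\ldots\ildtwokp$ for $W_{2n},M_{2n-1}$ versus the ascent type $\ldots\ildtwokb\ilone$ for $W_{2n-1},M_{2n}$. Substituting the length of $\alpha$ and the tabulated $s,e$ into the displayed inequality, each case collapses to a bound of the form $k \geq n+c$ with $c \in \{-1,0,+1\}$, reproducing every row of the table; the $\dtwo$ column, where $\alpha = 21$ has length $2$, is infeasible for any $\sigma$ of length at least $3$, yielding the ``False'' entries. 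The only genuinely delicate point is the bookkeeping: correctly assigning each $\sigma$ to its start‑ and end‑row and each family member $\alpha$ to its $W/M$ column, since a single mismatch there shifts a bound by $\pm 1$. Once the categories are pinned down the arithmetic is immediate.
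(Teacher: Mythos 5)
Your proposal is correct, and it takes a genuinely different route from the paper. The paper's entire proof is ``We examine all possible cases,'' i.e.\ a direct, from-scratch check of when one increasing oscillation contains another; notably, unlike its proof of Lemma~\ref{lemma_inequalities_for_pi}, it does not invoke Lemmas~\ref{lemma_unused_points_at_start} and~\ref{lemma_unused_points_at_end}. You instead recycle those two lemmas with the roles swapped ($\alpha$ as the host oscillation, $\sigma$ as the embedded permutation, $r=1$, no $\oneplus{}$ or ${}\plusone$ decoration), and since the block of unused points before the first used point and the block after the last used point are disjoint in any fixed embedding, the additive bound $\order{\alpha} \geq \order{\sigma} + s(\sigma,\alpha) + e(\sigma,\alpha)$ is a legitimate necessary condition, which is all the lemma literally claims. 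I checked the bookkeeping you deferred: reading the table's $k$ as the family parameter in $\familyil{\ildtwok}$ (so the four members are $W_{2k}$, $M_{2k+1}$, $W_{2k+1}$, $M_{2k+2}$ of lengths $2k$, $2k+1$, $2k+1$, $2k+2$, exactly as you wrote, the shape label $\ildtwokp$ merely distinguishing the pure-interleave shape from $\dtwo$), all sixteen pairs of types, plus the degenerate $\dtwo$ cases, collapse to the tabulated bounds; in particular the most delicate row, $\sigma = M_{2n}$ against $\alpha = \ildtwok = W_{2k}$, gives $s = e = 1$, hence $2k \geq 2n+2$, i.e.\ $k \geq n+1$, as required. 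What your route buys is economy and transparency: the table becomes a corollary of counts already established, rather than a fresh case analysis, and the necessity claim is visibly forced by a single counting inequality. What the paper's exhaustive examination buys is tightness: the surrounding machinery ($\rawmink$, $\mink$, and Theorem~\ref{theorem_mobius_sum_increasing_oscillations}) implicitly treats these inequalities as exact characterisations of containment, and your argument, by design, yields only the ``must be satisfied'' direction; to get sufficiency one must still exhibit embeddings attaining the bounds, extra work the paper's case check can absorb but which your proof deliberately omits (and which the statement, read literally, does not demand).
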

\begin{proof}
    We examine all possible cases.
\end{proof}

We are now nearly ready to present the main theorem 
for this section.  
Informally, for each possible shape
of permutation $\alpha$,
we will first find the
minimum and maximum values of $k$
such that $\sigma \leq \alpha \leq \pi$,
as any other values of $k$ result in 
$\alpha$ being outside the interval.
For each $\alpha$ and each $k$, we then 
determine the minimum value of $r$ such that
$\oneplus \sumrab \plusone \not\leq \pi$.
We can then use this value of $r$ (assuming it is non-zero)
to determine the weight to be applied to 
$\mobfn{\sigma}{\alpha}$.
The set of $\alpha$s with a non-zero weight
is a contributing set $\contrib{\sigma}{\pi}$.
At this point we can substitute a value
for any $\mobfn{\sigma}{\alpha}$
where $\order{\sigma} \leq \order{\alpha} - 1$.
We then use the same process
recursively to determine
the contributing set for the
remaining elements of $\contrib{\sigma}{\pi}$.

We first define some supporting functions.
Let $\rawmink(\sigma, \alpha)$
be the 
minimum value of $k$ that satisfies the inequality
in Lemma~\ref{lemma_inequalities_for_sigma}.
For the first inequality, which is always false,
we set $k=\order{\pi}$, as this will force
the sum, defined later in
Theorem~\ref{theorem_mobius_sum_increasing_oscillations},
to be empty.

Let $\mink(\sigma, \alpha)$ be defined as
\begin{align*}
\mink(\sigma, \alpha)
& =
\begin{cases*}
1 & If $\sigma = 1$ and $\alpha \neq \ildtwokp$, \\
2 & If $\sigma = 1$ and $\alpha = \ildtwokp$, \\
\rawmink(\sigma,\alpha) & otherwise.
\end{cases*}
\end{align*}
Observe that for any $k < \mink(\sigma, \alpha)$,
we have $\alpha < \sigma$, and so 
$\familysum{\nsums{k}{\alpha}}$ makes no net contribution to the \mob sum.

Let $\maxk(\alpha, \pi)$ be defined as
the maximum value of $k$ that satisfies the inequality in
Lemma~\ref{lemma_inequalities_for_pi},
if the shape of $\alpha$ and the shape of $\pi$ are different;
and
one less than the maximum value of $k$ that satisfies the inequality 
if the shape of $\alpha$ and the shape of $\pi$ are the same.    
For the first two inequalities, which do not involve $k$,
we set $\maxk(\alpha, \pi)=1$ if the inequality is satisfied, 
and $\maxk(\alpha, \pi)=0$ if not.
Observe here that 
for any $k > \maxk(\alpha, \pi)$ we have
$\alpha \not < \pi$, and so
$\familysum{\nsums{k}{\alpha}}$ makes no contribution to the \mob sum.

We define the weight function
for increasing oscillations, $\weightosc{\sigma}{\alpha}{\pi}$,
as
\begin{align*}
\weightosc{\sigma}{\alpha}{\pi}
& =
\begin{cases*}
1 & 
If
$
\sumrab \plusone \not \leq \pi,
$
\\
-1 & 
If
$
\sumrab \plusone \leq \pi 
\text{ and } 
\nsums{r+1}{\alpha} \not\leq \pi,
$
\\
0 &
Otherwise,
\end{cases*}
\end{align*}
where $r$ is the smallest integer 
such that $\oneplus \sumrab \plusone \not \leq \pi$.
These conditions are simpler than
those given in the weight function~(\ref{equation_general_weight_function})
for Theorem~\ref{theorem_mobius_sum_bottom_level_indecomposable} as,
by Corollary~\ref{corollary_add_2_for_op_or_po},
if 
$\sumrab \plusone \not \leq \pi$ then
$\oneplus \sumrab \not \leq \pi$ and vice-versa.
Furthermore, we will see that
this weight function
is only used when $\sigma \leq \sumra \leq \pi$.

We are now in a position to state our main theorem for 
this section.
In this theorem, we consider the contribution
to the \mob sum of each possible shape of 
some sum indecomposable $\alpha$.
There are five possible shapes, and, given that the 
expression for each shape is identical, 
we abuse notation slightly by writing our theorem
as a sum over the shapes, thus the
first sum in 
Theorem~\ref{theorem_mobius_sum_increasing_oscillations}
is over the possible shapes of $\alpha$,
where four of the shapes
have a parameter $k$.
For each shape, the limits on the interior sum
determine the minimum and maximum values
of $k$, using the summation variable $v$.  We use the 
notation $\alpha_v$ to represent the actual
permutation that has the shape $\alpha$,
where the parameter $k$ has been set to the value of $v$.
As an example, if $\alpha = \oneil \ildtwokb$,
and $v = 2$,
then $\alpha_v = \oneil \left( \nils{2}{\dtwo} \right) = 24153$.
 
\begin{theorem}
    \label{theorem_mobius_sum_increasing_oscillations}
    Let $\pi$ be an increasing oscillation,
    and let $\sigma \leq \pi$ be sum indecomposable.
    Then
    \begin{align*}
    \mobfn{\sigma}{\pi}
    & = 
    \sum_{\alpha \in \shapes} \
    \sum_{v=\mink(\sigma,\alpha)}^{\maxk(\alpha,\pi)}
    \mobfn{\sigma}{\alpha_{v}} 
    \weightosc{\sigma}{\alpha_{v}}{\pi}    
    \end{align*}
    where the first sum
    is over the possible shapes of a sum indecomposable
    permutation contained in an increasing oscillation,
    so 
    $\shapes = \{ \dtwo, \;$
    $\ildtwokp,\;$ 
    $\oneil \ildtwokb,\;$ 
    $\ildtwokb \ilone,\;$
    $\oneil \ildtwokb \ilone \}$.
\end{theorem}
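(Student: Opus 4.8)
The plan is to specialize Theorem~\ref{theorem_mobius_sum_bottom_level_indecomposable} to the case where $\pi$ is an increasing oscillation, and then to replace every containment test hidden inside $\contrib{\sigma}{\pi}$ and inside the weight function by the numeric inequalities collected in Lemmas~\ref{lemma_inequalities_for_pi} and~\ref{lemma_inequalities_for_sigma}. Theorem~\ref{theorem_mobius_sum_bottom_level_indecomposable} already writes $\mobfn{\sigma}{\pi}$ as a signed sum over the sum indecomposable $\alpha \in \contrib{\sigma}{\pi}$, so the first task is to identify exactly which indecomposables occur. By Lemma~\ref{lemma_form_of_permutations_in_increasing_oscillations}, the only $\lambda \in [\sigma,\pi)$ with $\mobfn{\sigma}{\lambda} \neq 0$ lie in families built on increasing oscillations belonging to $\familyil{\ildtwok}$, and these oscillations are partitioned by the five members of $\shapes$. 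I would therefore re-index the sum over $\contrib{\sigma}{\pi}$ as the double sum in the statement: the outer sum ranges over the shape of $\alpha$, and the inner sum ranges over the parameter $k$ (written $v$), with $\alpha_v$ the concrete oscillation of that shape.

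Next I would pin down the range of $v$. For a fixed shape, $\alpha_v \in [\sigma,\pi)$ is equivalent to the two containments $\sigma \leq \alpha_v$ and $\alpha_v < \pi$. The first is governed by Lemma~\ref{lemma_inequalities_for_sigma}, whose minimal admissible $k$ is precisely $\rawmink(\sigma,\alpha)$, adjusted to $\mink(\sigma,\alpha)$ in the two exceptional $\sigma = 1$ cases where the lemma's hypothesis $\sigma > 1$ fails; the second is governed by Lemma~\ref{lemma_inequalities_for_pi}, whose maximal admissible $k$ gives $\maxk(\alpha,\pi)$, with the ``one less'' correction when $\alpha$ and $\pi$ share a shape accounting for the strictness $\alpha_v \neq \pi$. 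Thus $\mink(\sigma,\alpha) \leq v \leq \maxk(\alpha,\pi)$ should hold if and only if $\alpha_v \in [\sigma,\pi)$, so the inner summation limits select exactly the shape-$\alpha$ members of $\contrib{\sigma}{\pi}$.

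It then remains to check that each summand carries the correct weight. For an admissible $\alpha_v$, Observation~\ref{observation_only_r_and_rplusone_count} reduces the relevant families to those indexed by $\sumra$ and $\nsums{r+1}{\alpha}$, where $r$ is the smallest integer with $\oneplus \sumrab \plusone \not\leq \pi$. Corollary~\ref{corollary_add_2_for_op_or_po} shows that for increasing oscillations $\oneplus \sumrab \leq \pi$ and $\sumrab \plusone \leq \pi$ require equally many points, so the three-clause condition defining the general weight function collapses to the two clauses defining $\weightosc{\sigma}{\alpha}{\pi}$. Invoking Lemma~\ref{lemma_weight_of_alpha} (equivalently, the analysis summarized in Table~\ref{table_family_members_in_an_interval}) then identifies $\mobfn{\sigma}{\alpha_v}\weightosc{\sigma}{\alpha_v}{\pi}$ with the net contribution of all families $\familysum{\nsums{r}{\alpha_v}}$, which together with the Möbius recursion $\mobfn{\sigma}{\pi} = -\sum_{\lambda \in [\sigma,\pi)}\mobfn{\sigma}{\lambda}$ yields the displayed identity. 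Where $\order{\alpha_v} \in \{\order{\sigma},\order{\sigma}+1\}$ the factor $\mobfn{\sigma}{\alpha_v}$ may be read off directly, and otherwise one recurses.

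The main obstacle is the boundary bookkeeping rather than any single hard idea. The delicate points are: the off-by-one in the ``same shape'' case of $\maxk$, which must exclude $\alpha_v = \pi$ without dropping the genuine contributor just below it; the two $\sigma = 1$ overrides in $\mink$, where the justification ``$\alpha < \sigma$'' is unavailable and one must argue the boundary behaviour (for instance that $\ildtwokp$ at $k=1$ must be treated separately from the shape $\dtwo$) by hand; and the treatment of the shape $\dtwo$, whose flexible packing (Lemma~\ref{lemma_unused_points_21}) is governed by a different inequality from the interleaved shapes. I would verify each of the five shapes against each of the four oscillation types ($W_{2n}$, $W_{2n-1}$, $M_{2n}$, $M_{2n-1}$) as a finite case check, confirming in every instance that $\mink$, $\maxk$, and the value of $r$ fed to $\weightosc$ agree with the containments demanded by $\contrib{\sigma}{\pi}$, and that each indecomposable is counted exactly once.
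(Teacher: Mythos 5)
Your proposal is correct and takes essentially the same route as the paper's proof: restrict the contributing permutations to the five shapes via Lemma~\ref{lemma_form_of_permutations_in_increasing_oscillations}, convert the containments $\sigma \leq \alpha_v < \pi$ into the $\mink$/$\maxk$ summation limits via Lemmas~\ref{lemma_inequalities_for_sigma} and~\ref{lemma_inequalities_for_pi}, and collapse the general weight function to $\weightosc{\sigma}{\alpha}{\pi}$ via Corollary~\ref{corollary_add_2_for_op_or_po}. The only cosmetic difference is that you invoke Observation~\ref{observation_only_r_and_rplusone_count} and Lemma~\ref{lemma_weight_of_alpha} directly, whereas the paper routes the same conclusion through Theorem~\ref{theorem_mobius_sum_bottom_level_indecomposable}, whose proof rests on those same results.
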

\begin{proof}
    By Lemma~\ref{lemma_form_of_permutations_in_increasing_oscillations}
    the only sum-decomposable permutations contained
    in an increasing oscillation that contribute to the 
    \mob sum are $\familysum{\sumra}$,
    where $\alpha \in \shapes$.
    
    If we set $r=1$, then for each $\alpha$ in $\shapes$
    Lemma~\ref{lemma_inequalities_for_sigma} provides the
    smallest value of $k$ such that $\sigma \leq \alpha$.
    If there is no such value of $k$, then we use $\order{\pi}$,
    as the maximum value of $k$ must be smaller than this,
    and so the sum is empty.
    
    Again setting $r=1$, for each $\alpha$ in $\shapes$
    Lemma~\ref{lemma_inequalities_for_pi} provides the 
    maximum value of $k$ such that $\alpha \leq \pi$.
    If there is no value of $k$ that satisfies the 
    inequality, then we set $\maxk(\alpha,\pi) = 0$,
    thus forcing the sum to be empty.
    
    Thus the permutations $\alpha_v$ in the sum
    \[
    \sum_{\alpha \in \shapes} \
    \sum_{v=\mink(\sigma,\alpha)}^{\maxk(\alpha,\pi)}
    \] 
    are those that could contribute to the \mob sum,
    and for any $\alpha_v$ not included in the sum,
    $\familysum{\nsums{r}{\alpha_v}}$ has a zero contribution
    to the \mob sum for any $r$.
    
    Further, we can see from the construction method that
    any $\alpha_v$ included in the sum 
    has $\sigma \leq \nsums{r}{\alpha_v} \leq \pi$
    for at least one value of $r$, as if this was
    not the case, then we would have
    $\mink(\sigma,\alpha) > \maxk(\alpha,\pi)$, 
    and so the sum would be empty.
    
    We have therefore shown that the $\alpha_v$-s included 
    in the sum form a contributing set, and we could 
    therefore set $\contrib{\sigma}{\pi}$ to be those
    $\alpha_v$-s, and use
    Theorem~\ref{theorem_mobius_sum_bottom_level_indecomposable}.
    We now show that the increasing oscillation weight function
    $\weightosc{\sigma}{\alpha}{\pi}$ is equivalent
    to $\weightgen{\sigma}{\alpha}{\pi}$ as defined in the
    general case.
    
    By Corollary~\ref{corollary_add_2_for_op_or_po},
    if 
    $\sumrab \plusone \not \leq \pi$ then
    $\oneplus \sumrab \not \leq \pi$ and vice-versa,
    and so the condition for $\sumrab \plusone$ 
    also covers $\oneplus \sumrab$.
    As discussed above, 
    we know that there is at least one value of $r$ such that
    $\sigma \leq \sumra \leq \pi$, and so 
    $\weightosc{\sigma}{\alpha}{\pi}$ does not need to 
    include this condition.  
    Thus the increasing oscillation weight function
    $\weightosc{\sigma}{\alpha}{\pi}$ is equivalent
    to $\weightgen{\sigma}{\alpha}{\pi}$ as defined in the
    general case.
\end{proof}

\subsection{Example of Theorem~\ref{theorem_mobius_sum_increasing_oscillations}}
As an example of 
Theorem~\ref{theorem_mobius_sum_increasing_oscillations}
in action, we show how to determine
\[
\mobfn{3142}{315274968} = \mobfn{\nils{2}{\dtwo}}{\nils{4}{\dtwo} \ilone}.
\]
We start by considering each 
possible shape of $\alpha$,
setting $r=1$, and then using
the inequalities in
Lemmas~\ref{lemma_inequalities_for_pi}
and~\ref{lemma_inequalities_for_sigma}
to determine the minimum and maximum 
values of $k$.  This gives us

\begin{center}
\begin{tabular}{ccc}
    \toprule
    Shape of $\alpha$ & Minimum $k$ & Maximum $k$  \\ 
    \midrule
    $\dtwo$ & 1 & 1 \\
    $\ildtwok$ & 2 & 4 \\
    $\oneil \ildtwokb$ & 2 & 3 \\
    $\ildtwokb \ilone$ & 2 & 3 \\
    $\oneil \ildtwokb \ilone$ & 2 & 3 \\
    \bottomrule
\end{tabular}
\end{center}

For each shape of $\alpha$, and each value of $k$, we then
use the inequalities in
Lemma~\ref{lemma_inequalities_for_pi}
to determine the minimum value of $r$ such that
$\oneplus \sumrab \plusone \not \leq \pi$,
and we then calculate the weight using this
value of $r$.  This gives

\begin{center}
\begin{tabular}{ccr}
    \toprule
    $\alpha$ & $r$ & Weight \\ 
    \midrule
    $\dtwo$ & \multicolumn{2}{c}{No possibilities} \\
    $\nils{2}{\dtwo}$ & $2$ & $1$ \\
    $\nils{3}{\dtwo}$ & $1$ & $-1$ \\
    $\nils{4}{\dtwo}$ & $1$ & $-1$ \\
    $\oneil \left( \nils{2}{\dtwo} \right)$ & $1$ & $-1$ \\
    $\oneil \left( \nils{3}{\dtwo} \right)$ & $1$ & $-1$ \\
    $\left( \nils{2}{\dtwo} \right) \ilone$ & $2$ & $1$ \\
    $\left( \nils{3}{\dtwo} \right) \ilone$ & $1$ & $-1$ \\
    $\oneil \left( \nils{2}{\dtwo} \right) \ilone$ & $1$ & $-1$ \\    
    $\oneil \left( \nils{3}{\dtwo} \right) \ilone$ & $1$ & $-1$ \\
    \bottomrule
\end{tabular}
\end{center}

This leads to the following initial
expression:
\begin{align*}
\mobfn{\nils{2}{\dtwo}}{\nils{4}{\dtwo} \ilone}
= &
  \mobfn{\nils{2}{\dtwo}}{\nils{2}{\dtwo}}
- \mobfn{\nils{2}{\dtwo}}{\nils{3}{\dtwo}}
- \mobfn{\nils{2}{\dtwo}}{\nils{4}{\dtwo}} 
\\ &
- \mobfn{\nils{2}{\dtwo}}{\oneil \left( \nils{2}{\dtwo} \right)} 
- {} \mobfn{\nils{2}{\dtwo}}{\oneil \left( \nils{3}{\dtwo} \right)} 
\\ &
+ \mobfn{\nils{2}{\dtwo}}{\nils{2}{\dtwo} \ilone}
- \mobfn{\nils{2}{\dtwo}}{\nils{3}{\dtwo} \ilone}
\\ & 
- \mobfn{\nils{2}{\dtwo}}{\oneil \left( \nils{2}{\dtwo} \right) \ilone}
- \mobfn{\nils{2}{\dtwo}}{\oneil \left( \nils{3}{\dtwo} \right) \ilone}
\end{align*}

We know that $\mobfn{\nils{2}{\dtwo}}{\nils{2}{\dtwo}} = 1$,
and that 
\[
\mobfn{\nils{2}{\dtwo}}{\oneil \left( \nils{2}{\dtwo} \right)} =
\mobfn{\nils{2}{\dtwo}}{\nils{2}{\dtwo} \ilone} = 
-1.
\]  
Applying 
Theorem~\ref{theorem_mobius_sum_increasing_oscillations}
recursively to the other intervals eventually yields
\[
\mobfn{\nils{2}{\dtwo}}{\nils{4}{\dtwo} \ilone} = -6.
\]
\section{Concluding remarks}\label{section-concluding-remarks}

The results in~\cite{Burstein2011} provide
two recurrences to handle the case where 
$\pi$ is decomposable.  
This work handles the case where $\sigma$ is indecomposable.
It overlaps with~\cite{Burstein2011}
when $\sigma$ is indecomposable 
and $\pi$ is decomposable.
This leaves the case where 
$\sigma$ is decomposable and 
$\pi$ is indecomposable
for further investigation.

We can see that
by symmetry
$\mobfn{\sigma}{W_{n}} = \mobfn{\sigma^{-1}}{M_{n}}$.
If we consider the value of the 
principal \mob function, $\mobfn{1}{\pi}$,
where $\pi$ is either $W_n$ or $M_n$,
then it is simple to show that the absolute value
of the principal \mob function
is bounded above by $2^n$.
The weight function for increasing oscillations
can be $\pm 1$, and we can see no obvious reason why
there should not be two distinct values, $i$ and $j$,
with the same parity, 
such that the signs of $\mobfn{1}{W_i}$ and 
$\mobfn{1}{W_j}$ were different.
We have experimental evidence,
based on the values of $W_n$ and $M_n$
for 
$n=1 \ldots 
\text{2,000,000}  
$ 
that suggests that
$\mobfn{1}{W_{2n}} < 0$, and that 
$\mobfn{1}{W_{2n-1}} > 0$.

Figure~\ref{figure_increasing_oscillation_values}
is a log-log plot of the absolute values of $\mobfn{1}{W_{2n}}$
from $n=\text{8,000}$ to $n=\text{10,000}$.  
As can be seen, there seems to be some 
evidence of ``banding'', and we 
have confirmed that this pattern
continues for all values examined.
Examination of the values of 
$\mobfn{1}{W_{2n-1}$} reveals the same 
patterns.
\begin{figure}[!htb]
    \centering
        \centering
        \input{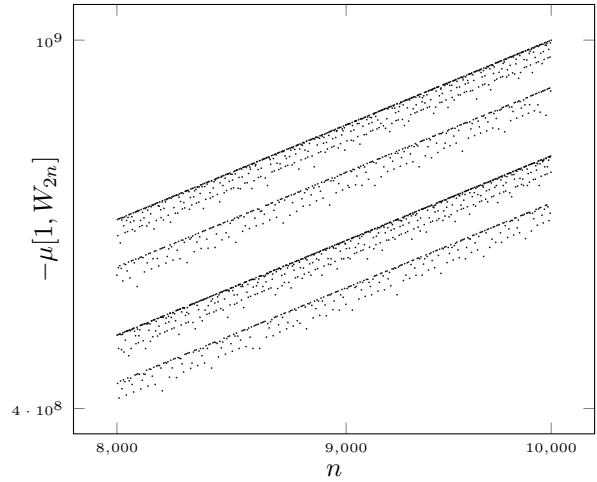}
        \caption{Log-Log plot of $\order{W_{2n}}$.}
        \label{figure_increasing_oscillation_values}
\end{figure}

Following discussions
at Permutation Patterns 2017,
V{\'{i}}t Jel{\'{i}}nek~\cite{Jelinek2017a} provided the following conjecture
(rephrased to reflect our notation).
\begin{conjecture}
    [{Jel{\'{i}}nek~\cite{Jelinek2017a}}]
    Let $M(n)$ denote the absolute value of the 
    \mob function $\mobfn{1}{W_n} = \mobfn{1}{M_n}$.  
    Then for $n > 50$ we have
    \begin{align*}
    M(2n) & = n^2 
    \Longleftrightarrow  
    \text{$n+1$ is prime and $n \equiv 0 \mymod 6$} 
    \\
    M(2n) & = n^2 - 1 
    \Longleftrightarrow  
    \text{$n+1$ is prime and $n \equiv 4 \mymod 6$} 
    \\
    M(2n+1) & = n^2 - n
    \Longleftrightarrow  
    \text{$n+1$ is prime and $n \equiv 0 \mymod 6$} 
    \\
    M(2n+1) & = n^2 - n - 1 
    \Longleftrightarrow  
    \text{$n+1$ is prime and $n \equiv 4 \mymod 6$} 
    \\
    \end{align*}    
    Further, Jel{\'{i}}nek notes that 
    there does not seem to be any other ``small'' constant $k$ such that $M(n) = (n^2-k)/4$ infinitely often.    
\end{conjecture}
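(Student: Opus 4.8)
The plan is to turn the recursion of Theorem~\ref{theorem_mobius_sum_increasing_oscillations} into an explicit arithmetic formula for $M(n) = \abs{\mobfn{1}{W_n}}$ and then read off exactly when it attains its extremal values. Setting $\sigma = 1$ collapses the inner limits: by definition $\mink(1,\alpha) = 1$ for every shape except $\ildtwokp$, where it is $2$, so the double sum over $\shapes$ runs over essentially all admissible $(\text{shape}, k)$. For each such pair the weight $\weightosc{1}{\alpha_v}{\pi}$ selects a single value of $r$ --- the smallest with $\oneplus\sumrab\plusone \not\leq \pi$ --- and contributes $\pm \mobfn{1}{\alpha_v}$. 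Since each $\alpha_v$ is itself an increasing oscillation of one of the five shapes, and $\mobfn{1}{W_m} = \mobfn{1}{M_m}$ by symmetry, this expresses $\mobfn{1}{W_n}$ as an $O(n)$-term signed sum of $\mobfn{1}{\cdot}$ over strictly shorter oscillations. First I would solve this recursion in closed form, aiming at an identity of the shape $M(2n) = n^2 - c(n)$ (and $M(2n+1) = n^2 - n - c'(n)$) where $c, c' \geq 0$ are explicit arithmetic corrections.

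The arithmetic enters through the embedding inequalities of Lemma~\ref{lemma_inequalities_for_pi}. After $\sigma=1$ they take the uniform form $r(k+1) \leq n + O(1)$ --- for instance $\sumra \leq W_{2n}$ with $\alpha = \ildtwokp$ requires $2kr + 2r - 2 \leq 2n$, i.e.\ $r(k+1) \leq n+1$ --- so the value of $r$ attached to each $k$ is the near-reciprocal $r \approx (n+1)/(k+1)$, and the boundary terms correspond to pairs $(r, k+1)$ with $r(k+1)$ at or just below $n+1$. The crucial point is that the tight equation $r(k+1) = n+1$, with $r \geq 1$ and $k+1 \geq 2$, has only the trivial solution $r = 1$, $k = n$ precisely when $n+1$ is prime; every nontrivial factorisation of $n+1$ contributes an extra boundary term that depresses $M$. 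I would show that the correction $c(n)$ is a positive-weighted count of exactly these nontrivial factorisations of $n+1$, so that $c(n) \leq 1$ if and only if $n+1$ is prime.

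The residue conditions are then largely forced: for a prime $p = n+1 > 3$ one has $p \equiv \pm 1 \pmod 6$, i.e.\ $n \equiv 0$ or $4 \pmod 6$, so these are exactly the classes in which $n+1$ can be a prime exceeding $3$. What remains is to pin down the \emph{value} ($n^2$ versus $n^2-1$) within each class, which I would obtain by computing the residual boundary term of size $0$ or $1$ surviving after $c(n)$ vanishes: this requires tracking the single tight contribution $r=1$, $k=n$ together with the exceptional shape $\dtwo$ (governed by $3r-1 \leq 2n$ rather than by $r(k+1) \leq n$) and the two one-sided shapes $\oneil\ildtwokb$ and $\ildtwokb\ilone$, whose start/end offsets in Lemmas~\ref{lemma_unused_points_at_start} and~\ref{lemma_unused_points_at_end} differ by one between the $W$ and $M$ families. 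The odd-length statement should follow from the same computation with the $W_{2n-1}$, $M_{2n-1}$ rows. In parallel I would prove the sign pattern $\mobfn{1}{W_{2n}} < 0$, $\mobfn{1}{W_{2n-1}} > 0$ by tracking the sign of the leading term through the recursion, which is what licenses replacing $\mobfn{1}{W_n}$ by $\pm M(n)$.

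The hard part will be the ``only if'' directions, i.e.\ ruling out composite $n+1$. Evaluating $M$ under the hypotheses is, in principle, a finite if delicate unwinding of the formula; but showing that \emph{every} composite $n+1$ forces $M(2n) \leq n^2 - 2$ demands a strict positive lower bound on the divisor-correction $c(n)$, and hence sharp control that no accidental cancellation among the signed boundary terms can restore an extremal value. This extremal estimate on $c(n)$ is, I expect, the crux of the whole argument. The closing remark --- that no other small $k$ gives $M(N) = (N^2 - k)/4$ infinitely often along the even subsequence --- would come out of the same analysis, since it amounts to showing $c(n)$ cannot stay bounded along an infinite set of $n$ with $n+1$ composite, a statement that the divisor-function lower bounds underlying $c$ should rule out.
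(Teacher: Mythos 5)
You should first be clear about what kind of statement this is: in the paper it is a \emph{conjecture}, attributed to Jel\'{\i}nek and supported only by computations up to length $2{,}000{,}000$; the paper contains no proof of it, nor anything close to one. So there is no paper argument to compare against, and your proposal, to be judged correct, would have to resolve an open problem. It does not: it is a research plan whose decisive steps are explicitly deferred. You assume the sign pattern $\mobfn{1}{W_{2n}} < 0$, $\mobfn{1}{W_{2n-1}} > 0$, but this is itself only an experimental observation in the paper (the authors remark that they see no obvious reason why the signs could not vary); you assume the recursion of Theorem~\ref{theorem_mobius_sum_increasing_oscillations} can be ``solved in closed form'', which is precisely the missing ingredient --- each term $\mobfn{1}{\alpha_v}$ must itself be expanded recursively, so the object to control is a signed sum over a whole tree of nested shapes, not a single level of boundary terms; and you concede that the ``only if'' direction (every composite $n+1$ forces $M(2n) \leq n^2 - 2$) rests on an extremal estimate you cannot supply. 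Acknowledging that this is ``the crux'' is honest, but it means there is no proof here.

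There is also a concrete structural error. You claim the correction $c(n) = n^2 - M(2n)$ is ``a positive-weighted count of exactly these nontrivial factorisations of $n+1$'', so that $c(n) \leq 1$ if and only if $n+1$ is prime. That is quantitatively incompatible with the paper's own data: the banding conjecture (the paper's second conjecture) asserts that $M(2n)/n^2$ lies in bands reaching down to roughly $0.615$, so $c(n)$ is typically of order $n^2$, whereas the number of nontrivial factorisations of $n+1$ is $O(n^{\varepsilon})$. Hence either the weights in your ``count'' are themselves of size about $n^2$ --- in which case counting factorisations does no work and the entire content lies in controlling those weights and their cancellations --- or the proposed shape of $c(n)$ is simply wrong. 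Relatedly, the terms of the one-level recursion have magnitudes up to $\Theta(n^2)$ (by the very conjecture at issue) and there are $\Theta(n)$ of them, so massive cancellation must occur in the recursive expansion; your observation that $r(k+1) = n+1$ has only the trivial solution exactly when $n+1$ is prime is a plausible heuristic source of the primality connection, but turning it into the four stated equivalences, with the mod-$6$ refinement selecting $n^2$ versus $n^2 - 1$, requires exactly the cancellation control you defer. The statement remains open.
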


We also have the following conjecture relating to the ``banding''
of the values.
\begin{conjecture}
    Let $M(n)$ denote the absolute value of the 
    \mob function $\mobfn{1}{W_n} = \mobfn{1}{M_n}$.  
    Let $E(n) = M(n) / (n^2)$, and
    let $O(n) = M(n)/(n^2 + n)$.
    Then, with $n \geq 1$, there exist constants 
    $0 < a < b < c < d < e < f < g < 1$ such that
    \begin{align*}
    E(12n + 10) & \in [ a , b ]  & O(12n + 11) & \in [ a , b ] \\
    E(12n +  2) & \in [ c , d ]  & O(12n +  3) & \in [ c , d ] \\
    E(12n +  6) & \in [ c , d ]  & O(12n +  7) & \in [ c , d ] \\
    E(12n +  4) & \in [ e , f ]  & O(12n +  5) & \in [ e , f ] \\
    E(12n +  8) & \in [ g , 1 ]  & O(12n +  9) & \in [ g , 1 ] \\
    E(12n     ) & \in [ g , 1 ]  & O(12n +  1) & \in [ g , 1 ] \\    
    \end{align*}
    Examining the first 1,500,000 values of 
    $\mobfn{1}{W_n}$
    gives the following
    estimates for the constants.
    \[
    \begin{array}{ccccccc}
        a & b & c & d & e & f & g \\
        0.615 & 0.680 &
        0.692 & 0.760 &
        0.821 & 0.896 &
        0.923
    \end{array}
    \]
\end{conjecture}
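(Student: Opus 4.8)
The plan is to convert Theorem~\ref{theorem_mobius_sum_increasing_oscillations} into an exact linear recurrence for the sequence $f(n) := \mobfn{1}{W_n}$ (equivalently $\mobfn{1}{M_n}$, by the symmetry $\mobfn{1}{W_n}=\mobfn{1}{M_n}$ noted in the concluding remarks), and then read the banding off the analytic structure of $F(x) = \sum_{n\ge 1} f(n)\,x^n$. First I would specialise the theorem to $\sigma=1$: here $\mink(1,\alpha)$ collapses to the value $1$ or $2$, so the inner sums run over $v=k$ from a fixed small constant up to $\maxk(\alpha,\pi)$, and for each admissible pair $(k,\alpha)$ the integer $r$ and the sign $\weightosc{1}{\alpha_v}{W_n}$ are fixed purely by the inequalities of Lemmas~\ref{lemma_inequalities_for_pi} and~\ref{lemma_inequalities_for_sigma} together with Corollary~\ref{corollary_add_2_for_op_or_po}. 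Assembling the five shapes of $\shapes$ expresses $f(n)$ as a signed convolution $f(n)=\sum_{j<n} w(n,j)\,f(\order{\alpha_v})$ over smaller oscillations, whose kernel $w(n,j)$ is the net $\pm1$ weight.

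The central structural claim I would isolate is that, after grouping by the length $\order{\alpha_v}$, the kernel $w(n,j)$ depends only on the difference $n-j$ and on the residue $n \bmod 12$. This is plausible because every inequality governing $\mink$, $\maxk$ and the weight is linear in $n,k,r$ with coefficients $2$ or $3$, so the places where a band edge is crossed, a shape/shape coincidence lowers $\maxk$, or the $\alpha=\dtwo$ packing of Lemma~\ref{lemma_unused_points_21} changes parity, all recur with period dividing $12=\mathrm{lcm}(2,3,4)$. Granting this, $F(x)$ becomes \emph{rational}: a convolution recurrence with a kernel that is eventually periodic in $n-j$ and in $n\bmod 12$ yields $F(x)=P(x)/Q(x)$, most cleanly via a constant transfer matrix on the vector $(f(12m), \dots, f(12m+11))$. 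I would compute $Q$ explicitly from the recurrence.

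Given rationality, the banding is a partial-fraction statement. The experimentally observed quadratic growth forces a pole of order three at $x=1$, and the period-$12$ oscillation forces poles precisely at the twelfth roots of unity $\zeta$ with $\zeta^{12}=1$; one must check that all remaining poles lie strictly outside the closed unit disc, so their contributions are exponentially small. Partial fractions then give $f(n)=c_2(n)\,n^2+c_1(n)\,n+c_0(n)+o(1)$ with each $c_i$ a period-$12$ function built from the residues at the twelfth roots of unity. Dividing the leading term $c_2(n)\,n^2$ by the normalisation in the definitions of $E$ and $O$ turns it into a period-$12$ value; the six even residues collapse onto the four levels $[a,b],[c,d],[e,f],[g,1]$, and the pairing $E(12n+s)$ with $O(12n+s+1)$ reflects that $W_N$ and $W_{N+1}$ are fed into the same level because $\zeta^{N}$ and $\zeta^{N+1}$ carry the same leading residue. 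That the top level is $[g,1]$ is exactly the ceiling witnessed by the preceding conjecture of Jel{\'{i}}nek, where $M(2n)=n^2$ (normalised ratio $1$) is attained.

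The main obstacle is twofold. First, the claim is \emph{uniform}: it asserts that \emph{every} value for $n\ge 1$, not merely the tail, lies in the stated closed interval, and that the four levels are genuinely disjoint with $a<b<\cdots<g$. This demands exact residues rather than asymptotics, so that the sub-leading period-$12$ coefficients $c_1,c_0$ and the decaying remainder can be bounded explicitly, and it demands the verification that no pole of $Q$ sits on the unit circle away from the twelfth roots of unity, since such a pole would inject an extra incommensurate oscillation and destroy the banding. Second, and harder, is actually \emph{proving} that $w(n,j)$ depends only on $n-j$ and $n\bmod 12$: the $\alpha=\dtwo$ case carries the genuine embedding freedom flagged before Lemma~\ref{lemma_unused_points_21}, and one must show that these choices, and the shape/shape effects on $\maxk$, telescope into constant coefficients rather than leaving an $n$-dependent drift. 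I would attack this last point first, because the entire generating-function route is unavailable without the constant-coefficient recurrence.
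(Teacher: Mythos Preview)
The statement you are attempting to prove is presented in the paper as a \emph{conjecture}, not a theorem: the paper offers no proof whatsoever, only experimental evidence from the first $1{,}500{,}000$ values of $\mobfn{1}{W_n}$ and numerical estimates for the constants $a,\dots,g$. There is therefore no ``paper's own proof'' against which to compare your proposal; any complete argument you produce would be a new result.

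As for the substance of your proposal: the strategy of extracting a linear recurrence from Theorem~\ref{theorem_mobius_sum_increasing_oscillations}, establishing rationality of the generating function, and reading the banding off the pole structure is a natural line of attack. However, you have correctly identified the genuine gap yourself. The claim that the kernel $w(n,j)$ depends only on $n-j$ and $n\bmod 12$ is not established, and it is not clear that it is even true in the form you state. The weight $\weightosc{1}{\alpha_v}{\pi}$ depends on the smallest $r$ with $\oneplus\sumrab\plusone\not\le\pi$, and this $r$ is determined by inequalities of the form $r(k+1)\le n$ or $3r\le 2n$ (Lemmas~\ref{lemma_inequalities_for_pi} and~\ref{lemma_unused_points_21}); the dependence of $r$ on both $k$ and $n$ is multiplicative, not additive, so the kernel is not obviously a function of $n-j$ alone modulo any fixed period. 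You would need to show that after summing over $r$ the net coefficient collapses to something depending only on $n-j$ and a residue class, and this is the entire content of the conjecture. Your second acknowledged obstacle --- that the conjecture is uniform in $n\ge 1$ with genuinely disjoint bands, not merely asymptotic --- is also real: even with exact residues at the twelfth roots of unity, bounding the transient terms to separate the four levels for \emph{all} $n$ would require explicit and rather delicate estimates.
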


The \emph{complete nearly-layered} permutations 
are formed by interleaving descending
permutations.  Formally,
a complete nearly-layered permutation has the form
\[
\alpha_1 \interleave \alpha_2 \interleave
\ldots
\interleave \alpha_{k-1} \interleave \alpha_k
\]
where each $\alpha_i$ is a descending permutation,
with $\alpha_i > 1$ for $i = 2, \ldots ,k-1$.
If we set $\alpha_i = 21$ for $i = 2, \ldots ,k-1$,
and $\alpha_1, \alpha_k \in \{1,21 \}$,
then we obtain the increasing oscillations.

The computational approach taken 
for increasing oscillations
could, we think, be adapted to 
complete nearly-layered permutations. 
It is clear
that the equivalent of the inequalities
in 
Lemmas~\ref{lemma_inequalities_for_pi}
and~\ref{lemma_inequalities_for_sigma}
would be somewhat more complex than those found here,
but we believe that it should be possible
to define an algorithm that could determine
the \mob function for complete nearly-layered
permutations where the lower bound is sum indecomposable.

\paragraph*{Acknowledgments.}
We would like to thank the anonymous reviewers
for their feedback and suggestions
which led to numerous improvements to this paper.

\bibliographystyle{abbrv} 
\bibliography{../Bibliography}  

\end{document}